\newtheorem{thm}{Theorem}[section]
\newtheorem{lem}{Lemma}[section]
\newtheorem{cor}[lem]{Corollary}
\theoremstyle{definition}
\theoremstyle{remark}
\numberwithin{equation}{section}
\newcommand{\E}{\mathbf{E}\,}
\newcommand{\Tr}{\mathrm{Tr}\;\!}
\newcommand{\re}{\mathrm{Re}\;\!}
\newcommand{\im}{\mathrm{Im}\;\!}
\newenvironment{Proof of}{\removelastskip\par\medskip
\noindent{\em Proof of} \rm}{\penalty-20\null\hfill$\square$\par\medbreak}
\begin{document}
\setcounter{page}{1}

\title{\bf The rate of convergence of spectra of sample covariance matrices}

\author{{\bf F. G\"otze}\\{\small Faculty of Mathematics}
\\{\small University of Bielefeld}\\{\small Germany}
\and {\bf A. Tikhomirov}$^{1}$\\{\small Faculty of Mathematics and Mechanics}\\{\small Sankt-Peterburg State University}
\\{\small S.-Peterburg, Russia}}
\date{}
\maketitle
 \footnote{$^1$Partially supported by RFBF
grant N
07-01-00583-a, by RF grant of the leading scientific schools NSh-4222.2006.1. Partially supported by CRC 701 ``Spectral Structures and Topological Methods in Mathematics'', Bielefeld}

% ---------------------------------------------------------------

\today
%\date{ December, 20,  2006}

\begin{abstract}It is shown that the  Kolmogorov distance
between the spectral distribution function
of a random covariance matrix  $\frac1p XX^T$,
where $X$ is a $n\times p$ matrix with independent entries  
and the distribution function 
of the  Marchenko-Pastur law is of order $O(n^{-1/2})$. The bounds hold {\it uniformly} for any $p$, including  $\frac pn$  equal or close to
$1$.  

\end{abstract}

\maketitle
\markboth{ F. G\"otze, A.Tikhomirov}{On the circular law}
% ----------------------------------------------------------------

\section{Introduction}
Let $X_{ij},{  }1\le i\le p, 1\le j\le n$, be independent random variables 
with $\E X_{ij}=0$ and $\E X_{ij}^2=1$ and 
$\bold X_p=\Big(X_{ij}\Big)_{\{1\le i\le p,{  } 1\le j\le n\}}$.
Denote by $\lambda_1\le\ldots\le \lambda_p$ the eigenvalues of 
the symmetric matrix 
$$
\bold W:=\bold W_p:=\frac1n \bold X
_p\bold X_p^T
$$
and define its empirical distribution by
$$
F_p(x)=\frac1p\sum_{k=1}^pI_{\{\lambda_k\le x\}},
$$
where $I_{\{B\}}$ denotes the indicator of an event $B$. 
We shall investigate the rate 
of convergence of the expected spectral distribution $\E F_p(x)$ as well as
$F_p(x)$ to
the Marchenko-Pastur distribution function $F_y(x)$ with  density
$$
f_y(x)=\frac1{2xy\pi}\sqrt{(b-x)(x-a)}I_{\{[a,b]\}}(x)+
I_{\{[1,\infty)\}}(y)(1-y^{-1})\delta(x),
$$
where $y\in (0,\infty)$ and $a=(1-\sqrt y)^2$, $b=(1+\sqrt y)^2$.
Here we denote by $\delta(x)$ the Dirac delta-function 
and by $I_{\{[a,b]\}}(x)$
the indicator function of the interval  $[a,b]$.
As in Marchenko and Pastur \cite{M-P:67}
and Pastur \cite{P:73} assume that $X_{ij}$, $i,j\ge1$,
are independent identically distributed random variables such that 
$$
\E X_{ij}=0,\qquad \E X_{ij}^2=1 \quad
\text{ and  }\qquad \E |X_{ij}|^4<\infty,\qquad\text{ for all } i,j .
$$
Then $\E F_p\to F_y$ and $F_p\to F_y$ in probability, 
where \newline $y=\lim_{n\to\infty}y_p:=\lim_{n\to\infty}
(\frac p n)\in(0,\infty)$.

Let $y:=y_p:=p/n$.
We introduce the following distance between the distributions 
$\E F_p(x)$ and $F_{y}(x)$
$$
\Delta_p:=\sup_x|\E F_p(x)-F_{y}(x)|
$$
as well as another distance between the distributions $F_p(x)$ and $F_{y}(x)$
$$
\Delta_p^*:=\E \sup_x|F_p(x)-F_{y}(x)|.
$$
We shall use the notation $\xi_n=O_P(a_n)$  if,
 for any $\varepsilon>0$, there exists an $L>0$ such that

 $\Pr\{|\xi_n| \ge L a_n\}\le  \varepsilon$.
 Note that, for any $L>0$,
 $$
 \Pr\{\sup_x|F_p(x)-F_{y}(x)|\ge L\}\le \frac{\Delta_p^*}{L}.
 $$
Hence  bounds for
$\Delta_p^*$ provide bounds for the rate of convergence in probability 
of  the quantity
$\sup_x|F_p(x)-F_{y}(x)|$  to zero. 
Using our techniques it is straightforward though technical  to prove that the
rate of almost sure convergence is at least $O(n^{-1/2+\epsilon})$, for any $
\epsilon >0$. In view of the length of the  proofs for the results stated above
we  refrain from including those  details in this paper as well.

Bai \cite{Bai:93} proved that 
$\Delta_p=O(n^{-\frac14})$,
assuming $\E X_{ij}=0$,
$\E X_{ij}^2=1$,\newline
$\sup\limits_n \sup\limits_{i,j}
\E X_{ij}^4\bold I_{\{|X_{ij}|>M\}}\to 0,\quad\text{as  } 
M\to\infty$,
and 
$$
y\in(\theta,\Theta)\text{   such that }0<\theta<\Theta<1
\text{  or  }
1<\theta<\Theta<\infty.
$$
If $y$ is close to $1$ the limit density and the Stieltjes transform of 
the limit density have a singularity.
In this case the investigation of the rate of convergence is more difficult.
Bai \cite{Bai:93} has shown that, if $0<\theta\le y_p\le \Theta<\infty$,
$
\Delta_p=O(n^{-\frac5{48}})
$. Recently Bai et al. \cite{Bai:03} have shown for $y_p$ equal to $1$
or asymptotically near $1$ that $\Delta_p=O(n^{-\frac 1 {8}})$ (see also \cite{Bai:05}). It is clear that
the case $y_p \approx 1$ requires different techniques. Results of the 
authors \cite{GT:05} show that for Gaussian r.v. $X_{ij}$ actually the rate $\Delta_p
=O(n^{- 1  })$ is the correct rate of approximation including the case $y=1$.

%=================================================================================================================================================
% 
%===========================================================================================================In the present paper we shall consider bounds for $\Delta_p$.
By 
$C$ (with an index or without it) we shall denote generic
absolute constants,
whereas $C(\,\cdot\,,\,\cdot\,)$ will denote
positive constants depending on arguments.
Introduce the notation, for $k\ge 1$,
$$
M_k:=M_k^{(n)}:=\sup_{1\le j,k\le n}\E|X_{jk}|^k.
$$
Our main results are the following
\begin{thm} \label{thm1.1} Let $1\ge y>\theta>0$, for some positive constant $\theta$.
 Assume that $\E X_{jk}=0$, $\E|X_{jk}|^2=1$, and
\begin{equation}
M_4:=\sup_{1\le j,k\le n}\E|X_{jk}|^4<\infty.
\end{equation}
Then there exists a positive constant $C(\theta)>0$ depending on $\theta$ 
such that 
$$
\Delta_p\le C(\theta)\,M_4^{\frac12} n^{-1/2}.
$$
\end{thm}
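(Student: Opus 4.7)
The plan is to use the Stieltjes transform method. Let
\[
m_p(z) := \frac{1}{p}\Tr\bigl(\W - zI\bigr)^{-1}, \qquad z = u+iv,\ v>0,
\]
and let $s_y(z)$ be the Stieltjes transform of $F_y$, which is the unique analytic solution in $\{\im z > 0\}$ of the Marchenko--Pastur equation $yzs^2 + (z - 1 + y)s + 1 = 0$. A Stieltjes-transform smoothing inequality of Bai's type bounds
\[
\Delta_p \le C\int_{-A}^{A}\bigl|\E m_p(u+iV) - s_y(u+iV)\bigr|\,du + (\text{boundary terms}),
\]
for a bounded $[-A,A]\supset[a,b]$ and imaginary height $V$ eventually of order $n^{-1/2}$; the boundary contributions are $O(n^{-1/2})$ thanks to the square-root edge behaviour of $F_y$.

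For the pointwise estimate, I would derive a perturbed self-consistent equation for $\E m_p$. Applying Schur's complement to each diagonal entry $R_{jj}(z)$ of $R(z) = (\W-zI)^{-1}$, with the $j$-th row $\abf_j$ of $\bold X_p$ removed, and using the standard identity relating $\abf_j^T(\bold X_p^{(j)T}\bold X_p^{(j)}/n - zI_n)^{-1}\abf_j$ to the resolvent of the smaller matrix, one obtains
\[
R_{jj}(z) = \frac{-1}{\,z - (1-y_p) + y_p z\, m_p(z) + \eps_j(z)\,},
\]
where $\eps_j(z)$ collects three error contributions: $n^{-1}\norm{\abf_j}^2 - 1$, the deviation of the quadratic form in $\abf_j$ from its expected trace, and the rank-one difference $m_p - m_p^{(j)} = O((pv)^{-1})$. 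Under $M_4<\infty$, martingale moment inequalities (Burkholder) give $\E|\eps_j(z)|^2 \le CM_4/(nv)$. Averaging over $j$ yields the perturbed Marchenko--Pastur relation
\[
y_p z\,(\E m_p(z))^2 + (z - 1 + y_p)\,\E m_p(z) + 1 = \delta_n(z), \qquad |\delta_n(z)| \le \frac{C\,M_4^{1/2}}{nv}.
\]
Subtracting the exact equation for $s_y$ and factoring,
\[
\bigl|\E m_p(z) - s_y(z)\bigr|\cdot\bigl|y_p z(\E m_p(z)+s_y(z)) + (z-1+y_p)\bigr| \le |\delta_n(z)|.
\]

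The main obstacle is that the stability factor $|2y_p z\,s_y + (z-1+y_p)| = \sqrt{(z-a)(z-b)}$ degenerates near each edge like $\sqrt{|u-a|+v}$ or $\sqrt{|u-b|+v}$, and, crucially, the lower edge $a = (1-\sqrt{y_p})^2$ itself approaches $0$ as $y_p\to 1$, precisely the regime where $f_y$ loses its bounded boundary behaviour. The condition $y > \theta > 0$ keeps the $y^{-1}$ prefactor in $f_y$ bounded, but the analysis has to be uniform up to $y = 1$. The strategy is a bootstrap: start with the trivial a priori bound $|\E m_p - s_y|\le 2/v$, use it to rigorously validate the concentration bound on $|\eps_j|$, and then reinsert the improved estimate into the self-consistent equation and iterate. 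Integrating the resulting pointwise bound over $u\in[-A,A]$, using that $\int du/\sqrt{|u-a|+v}$ and $\int du/\sqrt{|u-b|+v}$ are bounded, gives
\[
\int_{-A}^A\bigl|\E m_p(u+iV) - s_y(u+iV)\bigr|\,du \le \frac{C(\theta)\,M_4^{1/2}}{nV},
\]
which at $V = c\,n^{-1/2}$, combined with the smoothing inequality, delivers $\Delta_p \le C(\theta)M_4^{1/2}n^{-1/2}$.
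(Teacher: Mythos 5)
Your sketch follows the standard Stieltjes-transform route, but it works directly with $m_p(z)=\frac1p\Tr(\W-zI)^{-1}$ and with the Marchenko--Pastur density $f_y$. This misses the device that the paper actually hinges on, and without it the argument does not close in the regime $y\to 1$ which the theorem explicitly covers. Concretely: the smoothing inequality (Lemma 2.1) requires the \emph{limit density} to satisfy $\sup_x G'(x)\le c_g$ with $c_g$ bounded. But for $y$ close to $1$ the density $f_y(x)\sim\tfrac1{2\pi}\sqrt{(4-x)/x}$ is \emph{unbounded} near $x=0$; the prefactor $y^{-1}$ being bounded (from $y>\theta$) does not help, because the singularity comes from $\sqrt{(x-a)(b-x)}/x$ as $a=(1-\sqrt y)^2\to0$. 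Likewise, the stability factor $\sqrt{(z-a)(z-b)}$ in your factored equation degenerates at both the edge and the origin simultaneously as $y\to1$, and your bootstrap does nothing to control this. The paper's resolution is the Hermitization/symmetrization trick: it passes to the block matrix $\bold H=\left(\begin{smallmatrix}\bold O&\bold X\\ \bold X^*&\bold O\end{smallmatrix}\right)$, whose spectral distribution is the symmetrization $\widetilde F$ of $F_p$, and compares $\widetilde s_p(z)=zs_p(z^2)$ with $\widetilde s_y(z)=zs_y(z^2)$. The symmetrized density $\widetilde p_y(x)=|x|p_y(x)$ is bounded by $\frac1{\pi\sqrt y(1+\sqrt y)}$ \emph{uniformly} in $y\in(0,1]$ (equation \eqref{density}), so Lemma 2.1 applies uniformly; and the self-consistent equation $\widetilde s_y=-\bigl(z+y\widetilde s_y+\tfrac{y-1}{z}\bigr)^{-1}$ yields the uniform stability bound $|z+y\widetilde s_p(z)+\tfrac{y-1}{z}|\ge 1/\sqrt y$ (via Lemma \ref{Lemma 3.4} and \eqref{5.59}), replacing your degenerating $\sqrt{(z-a)(z-b)}$.

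A second gap concerns the claimed estimate $|\delta_n(z)|\le C M_4^{1/2}/(nv)$. Naive Cauchy--Schwarz from $\E|\eps_j|^2\le CM_4/(nv)$ and $|R_{jj}|\le v^{-1}$ gives only $|\delta_n|\lesssim M_4^{1/2} n^{-1/2}v^{-3/2}$, which is useless at $v\asymp n^{-1/2}$. Getting the rate $1/(nv)$ requires (i) splitting $\eps_j$ into centered parts $\eps_j^{(1)},\eps_j^{(2)}$ (whose contribution to $\E\eps_j R_{jj}$ is handled through $\E|\eps_j^{(\nu)}|^2|R_{jj}|^2$), the deterministic $\eps_j^{(3)}$, and the martingale part $\eps_j^{(4)}$ with the improved second-moment bound $\E|\eps_j^{(4)}|^2\lesssim (n^2v^3)^{-1}$; and (ii) a nontrivial recursion (Lemma \ref{Lemma 5.4}, iterating inequality \eqref{5.45} over $m\asymp\log n$ deletions) to bound $\frac1n\sum_k\E|R_{kk}|^2$ by a constant for $v\gtrsim n^{-1/2}$. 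None of this is present in your outline, and invoking ``Burkholder'' alone does not produce it. In short, the approach is recognizable but omits both the symmetrization that makes the proof uniform in $y\le1$ and the careful decomposition-plus-recursion that actually delivers $\delta_p(z)=O(M_4/(nv))$.
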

\begin{thm} \label{thm1.2} Let $1\ge y>\theta>0$, for some positive constant $\theta$. Assume that $X_{ij}$
$\E X_{jk}=0$, $\E|X_{jk}|^2=1$, and
$$
M_{12}:=\sup_{1\le j,k\le n}\E|X_{jk}|^{12}<\infty.
$$
Then there exists a  positive constant $C(\theta)>0$ depending on $\theta$ 
such that 
$$
\Delta_p^*=\E \sup_x|F_p(x)-G(x)|
\le C(\theta)M_{12}^{\frac16}\, n^{-1/2}.
$$
\end{thm}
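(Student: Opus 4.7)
\medskip
\noindent\textbf{Proof proposal for Theorem \ref{thm1.2}.}
The plan is to reduce the claim, via the triangle inequality
$$
\sup_x|F_p(x)-F_y(x)|\le\sup_x|F_p(x)-\E F_p(x)|+\sup_x|\E F_p(x)-F_y(x)|,
$$
to Theorem \ref{thm1.1} (which already yields $O(M_4^{1/2}n^{-1/2})$ for the second term) plus a concentration statement for $F_p$ around $\E F_p$. Passing from the empirical Kolmogorov distance to the Stieltjes transform via a Bai-type smoothing inequality, it suffices to estimate
$$
\E\int_{-A}^{A}\bigl|m_n(u+iV)-\E m_n(u+iV)\bigr|\,du
\quad\text{together with small boundary terms,}
$$
where $m_n(z)=\frac1p\Tr(\mathbf W-zI)^{-1}$ and $V$ is chosen of order $n^{-1/2}$ (on a scale that matches the target rate).

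For the concentration piece I would use the martingale decomposition with respect to the filtration generated by the columns of $\mathbf X_p$: writing $\E_k$ for conditional expectation given $X_{\cdot 1},\dots,X_{\cdot k}$,
$$
m_n(z)-\E m_n(z)=\sum_{k=1}^{n}(\E_k-\E_{k-1})\bigl[m_n(z)-m_n^{(k)}(z)\bigr],
$$
where $m_n^{(k)}$ is the Stieltjes transform of the matrix with the $k$-th column removed. Each summand is a martingale difference, and the Sherman--Morrison / Schur complement identity expresses it as a rational function of the quadratic form $\frac1n X_{\cdot k}^{T}(\mathbf W^{(k)}-zI)^{-1}X_{\cdot k}$ and its expectation. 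Applying Burkholder's inequality with exponent $q=6$,
$$
\E|m_n(z)-\E m_n(z)|^{6}\le C\,\E\Bigl(\sum_{k=1}^{n}|(\E_k-\E_{k-1})m_n(z)|^{2}\Bigr)^{3},
$$
reduces the task to controlling the sixth-order deviations of quadratic forms $X^{T}AX-\Tr A$, for which a Rosenthal/Burkholder inequality yields a bound in terms of $M_{12}$: this is precisely where the twelfth-moment assumption enters and where the prefactor $M_{12}^{1/6}$ appears.

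Combining this with the deterministic estimate $\|(\mathbf W^{(k)}-zI)^{-1}\|\le 1/\im z$ and the Hilbert--Schmidt bound $\|(\mathbf W^{(k)}-zI)^{-1}\|_{HS}^{2}\le p/(\im z)^{2}$ gives
$$
\bigl(\E|m_n(z)-\E m_n(z)|^{6}\bigr)^{1/6}\le\frac{C\,M_{12}^{1/6}}{n\,\im z},
$$
uniformly on the contour with $\im z=V\asymp n^{-1/2}$. Integrating this pointwise estimate over $u\in[-A,A]$ and bringing the expectation inside the Bai smoothing integral produces $\E\sup_x|F_p(x)-\E F_p(x)|\le C(\theta)M_{12}^{1/6}n^{-1/2}$, and combining with Theorem \ref{thm1.1} yields the claim.

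The main obstacle will be the singular regime $y\approx 1$: here the limiting density $f_y$ has a $1/\sqrt{x}$ singularity at the origin and $\E m_n(z)$ is not well approximated by its limit uniformly up to the spectral edges, so the naive choice of contour fails. To remedy this one must, as in the proof of Theorem \ref{thm1.1}, deform the contour to stay at distance $\gtrsim n^{-1/2}$ from the singularity, verify that the martingale quadratic-form bounds remain valid uniformly in $y\ge\theta$ (in particular that the denominators arising from Sherman--Morrison stay bounded away from zero in $L^{6}$), and control the boundary contributions from Bai's inequality at the spectral edges without picking up extra powers of $n$. Making these uniform estimates rigorous in the critical case $y\to 1$ is the technical heart of the argument.
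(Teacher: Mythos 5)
The paper does not give a self-contained proof of Theorem \ref{thm1.2}; it states explicitly that ``To prove Theorem \ref{thm1.2} and \ref{sparse} it is enough to repeat the proof of Theorem 1.2 and Corollary 1.3 in \cite{GT:03} with inessential changes.'' The route you describe---splitting $F_p-F_y$ into a bias term handled by Theorem \ref{thm1.1} and a fluctuation term handled via the Stieltjes--transform smoothing inequality, a column-by-column martingale decomposition, and a Burkholder inequality at exponent $6$ feeding on twelfth moments of the entries---is indeed the same broad framework as in \cite{GT:03}, and that framework is the right one.

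That said, two steps in your sketch do not hold as stated and would need to be repaired. First, the pointwise bound
$\bigl(\E|m_n(z)-\E m_n(z)|^{6}\bigr)^{1/6}\le C M_{12}^{1/6}/(n\,\im z)$
is stronger than what the method (and the paper) actually deliver. By H\"older it would imply $\bigl(\E|m_n(z)-\E m_n(z)|^{2}\bigr)^{1/2}\le C M_{12}^{1/6}/(n\,\im z)$, whereas the second-moment bound proved in the paper for this fluctuation (Lemma \ref{lem:4.2}, inequality (\ref{ineq10})) is of order $\sqrt{M_4}/(n\,v^{3/2})$, which for $v<1$ is strictly \emph{larger} than $1/(nv)$. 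Your exponent in $v$ is off: one extra half-power of $v$ in the denominator is unavoidable once $v\lesssim n^{-1/2}$, and chasing it through the computation matters because the integral $\int_v^V(\cdot)\,du$ in the smoothing inequality is dominated by its behaviour at the lower endpoint $u\approx v$. Second, the phrase ``Integrating this pointwise estimate over $u$ and bringing the expectation inside the Bai smoothing integral produces $\E\sup_x|F_p-\E F_p|\le \dots$'' elides the actual difficulty: the smoothing inequality (Corollary 2.3) gives a term $\sup_{x\in I}\bigl|\re\int_v^V(s_p-\E s_p)\,du\bigr|$, and you must bound the \emph{expectation of the supremum}, not the supremum of expectations. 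This is precisely where the sixth moments earn their keep, via a discretization of $x$ over a grid of roughly $n^{a}$ points, a union-type inequality $\E\max_i|Y_i|\le(\sum_i\E|Y_i|^6)^{1/6}$, and a Lipschitz bound on $x\mapsto\re\int_v^V(s_p(x+iu)-\E s_p(x+iu))\,du$ to control the off-grid gap. Without making that chaining step explicit, the argument does not close, because a pointwise $L^6$ bound by itself only controls a single $x$. Once the martingale analysis is carried out for the integral $\int_v^V(\cdot)\,du$ directly (rather than for the pointwise Stieltjes transform) and the discretization is done, the $n^{-1/2}$ rate with prefactor $M_{12}^{1/6}$ emerges; your identification of the $y\approx 1$ edge/singularity region as the technically delicate regime is correct, and it is handled exactly as in the proof of Theorem \ref{thm1.1} by working with the symmetrized matrix $\bold H$ and enforcing $v\gtrsim n^{-1/2}M^{1/2}$.
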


We shall prove the same result for the following class of sparse matrices. Let
$\varepsilon_{jk}$, $j=1,\ldots,n$, $k=1,\ldots,p$, denote Bernoulli random variables which are independent in aggregate and
independent of $(X_{jk})$  with
$p_n:=\Pr\{\varepsilon_{jk}=1\}$. Consider the matrix $\bold
X^{(\varepsilon)}=\frac1{\sqrt{np_n}}(\varepsilon_{jk}X_{jk})$.
Let $\lambda_1^{(\varepsilon)},\ldots,\lambda_p^{(\varepsilon)}$ denote the (complex)
eigenvalues of the matrix $\bold X^{(\varepsilon)}$ and denote by
$F_p^{(\varepsilon)}(x)$ the empirical spectral distribution
function of the matrix $\bold X^{(\varepsilon)}$, i. e.
\begin{equation}
F_p^{(\varepsilon)}(x):=\frac1p\sum_{j=1}^pI_{\{\lambda_j^{(\varepsilon)}\leq x,\}}.
\end{equation}
%====================================================================================================================================================
%                                    
%
\begin{thm}\label{sparse}
Let $X_{jk}$ be independent random variables with
\begin{equation}\notag
\E X_{jk}=0,\qquad \E |X_{jk}|^2=1,\quad \text{ and}\quad \E |X_{jk}|^{4}.
\end{equation}
Assume that $np_n\to\infty$ as $n\to\infty$
Then  
\begin{equation}
 \Delta_n^{(\varepsilon)}:=\sup_x|\E F_p^{(\varepsilon)}(x)-F_p(x)|\le CM_4^{1/2}(np_n)^{-\frac12}.
\end{equation}
.
\end{thm}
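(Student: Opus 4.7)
The plan is to deduce Theorem \ref{sparse} from Theorem \ref{thm1.1} by absorbing the Bernoulli thinning into the matrix entries, at the price of an inflated fourth moment. Concretely, set
$$
Y_{jk} := \frac{\varepsilon_{jk} X_{jk}}{\sqrt{p_n}}.
$$
Independence of the families $(\varepsilon_{jk})$ and $(X_{jk})$, together with $\E\varepsilon_{jk}^{m}=p_n$ for every $m\ge 1$, yields
$$
\E Y_{jk}=0,\qquad \E|Y_{jk}|^{2}=1,\qquad \sup_{j,k}\E|Y_{jk}|^{4}=\frac{M_4}{p_n}.
$$
The $Y_{jk}$ are independent across $(j,k)$, and by construction $\mathbf{X}^{(\varepsilon)}=\tfrac{1}{\sqrt{n}}(Y_{jk})$, so $F_p^{(\varepsilon)}$ is exactly the empirical spectral distribution of the standard sample-covariance matrix $\frac{1}{n}YY^{T}$ built from the array $(Y_{jk})$.

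Applying Theorem \ref{thm1.1} to $(Y_{jk})$ (the aspect ratio $y=p/n$ and its lower bound $\theta$ are inherited, and the first two moment conditions are met), with the fourth-moment constant $M_4/p_n$ in place of $M_4$, immediately gives
$$
\sup_x\bigl|\E F_p^{(\varepsilon)}(x) - F_y(x)\bigr| \le C(\theta)\Bigl(\frac{M_4}{p_n}\Bigr)^{\!1/2} n^{-1/2} = C(\theta)\, M_4^{1/2}(np_n)^{-1/2},
$$
which is the asserted bound, once the $F_p$ on the right-hand side of the theorem's display is read as the Marchenko--Pastur limit $F_y$ (the statement as printed must contain a typo, since $F_p$ is itself a random object and the supremum on the left is deterministic). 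The hypothesis $np_n\to\infty$ plays no role in the argument beyond ensuring that the right-hand side actually tends to zero.

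There is no genuine obstacle to overcome: all of the analysis (Stieltjes-transform estimates near the singularity at $y=1$, smoothing inequality, and the comparison of $\E F_p^{(\varepsilon)}$ with $F_y$) has already been carried out inside Theorem \ref{thm1.1}, and the sparsification affects only the single scalar $M_4$ in the hypothesis. The one point that deserves mention is that the $Y_{jk}$ are not identically distributed even when the $X_{jk}$ are, but Theorem \ref{thm1.1} is stated in the non-i.i.d.\ form and only requires $\E Y_{jk}=0$, $\E|Y_{jk}|^{2}=1$ and a finite supremum of fourth moments, all of which hold uniformly with the inflated constant $M_4/p_n$.
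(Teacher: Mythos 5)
Your reduction is correct, and it is essentially what the paper's terse remark ``repeat the proof \ldots with inessential changes'' amounts to: since Theorem~\ref{thm1.1} is stated for non-identically-distributed entries with only the three moment hypotheses, writing the sparsified matrix as $\frac{1}{\sqrt{n}}(Y_{jk})$ with $Y_{jk}=\varepsilon_{jk}X_{jk}/\sqrt{p_n}$ and verifying $\E Y_{jk}=0$, $\E|Y_{jk}|^2=1$, $\sup_{j,k}\E|Y_{jk}|^4=M_4/p_n$ makes Theorem~\ref{thm1.1} apply verbatim, and the stated bound drops out. Your reading of the garbled display (that the printed $F_p$ on the right side must mean $F_y$, or equivalently $\E F_p$ up to an extra term of the same or smaller order) is the only sensible one, and your observation that $np_n\to\infty$ is needed only so that the right-hand side, and with it the threshold $v_0\sim(np_n)^{-1/2}M_4^{1/2}$ in the resolvent estimates, is actually small, is also correct. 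One small caveat worth flagging: the statement of Theorem~\ref{sparse} omits the aspect-ratio hypothesis $1\ge y\ge\theta>0$ that Theorem~\ref{thm1.1} requires and that your argument inherits, so the constant should in fact be $C(\theta)$ rather than an absolute $C$; this is a defect of the paper's statement, not of your argument.
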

%================================================================================================================
We have developed a new approach to the investigation of convergence of spectra of sample covariance matrices based on the so-called Hadamar matrices. Note that our approach allows us to obtain a bound of the rate of convergence to the Marchenko-Pastur distribution uniformly in $1\ge y\ge\theta$
(including $y=1$). In this paper we give the proof of Theorem \ref{thm1.1} only.
To prove Theorem \ref{thm1.2} and \ref{sparse} it is enough to repeat the proof of Theorem 1.2 and Corollary 1.3 in \cite{GT:03} with   inessential   changes.

\section{  Inequalities for the  distance between distributions
via Stieltjes transforms.}
We define the Stieltjes transform $s(z)$ 
of a random variables $\xi$ with the distribution function $F(x)$
(the Stieltjes transform $s(z)$ of distribution function $F(x)$)
$$
s(z):=\E\frac1{\xi-z}=\int_{-\infty}^{\infty}\frac1{x-z}d\,F(x),
\quad z=u+iv,\quad  v>0.
$$

\begin{lem} Let $F$ and $G$ be a distribution functions such that
 \begin{equation}
\int_{-\infty}^{\infty}|F(x)-G(x)|\,dx<\infty.
\end{equation}
Denote their Stieltjes 
transforms by $s(z)$ and $t(z)$ respectively.
Assume that the distribution $G(x)$ has  support contained in the bounded interval $I=[a,b]$.
Assume that there exists a positive constant $c_g$ such that
\begin{equation}
 \sup_x\frac d{dx}G(x)\le c_g.
\end{equation}
Denote their Stieltjes 
transforms by $s(z)$ and $t(z)$ respectively. Let $v>0$. 
Then there exist some constants $C_1(c_g),\, C_2(c_g),\, C_3(c_g)$ 
depending only on $c_g$, such that
\begin{align}
\Delta(F,G)&:=\sup_{x}\, |F(x)-G(x)| \\&\le \, 
C_1\, \sup_{x\in I}
\, |\text{\rm{Im}}\,\Big(\int_{-\infty}^x(s(z)-s_y(z))\, du\Big)|+
C_2\, v ,
\end{align}
where $z=u+iv$.
\end{lem}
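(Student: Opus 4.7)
The plan is to prove a standard smoothing inequality that relates the Kolmogorov distance to the Stieltjes transforms, exploiting the Lipschitz regularity of $G$ via the Poisson (Cauchy) kernel.

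First I would use the identity $\im (x-u-iv)^{-1} = v/((x-u)^2+v^2)$ to rewrite $\tfrac{1}{\pi}\im\, s(u+iv) = (C_v * dF)(u)$, where $C_v(y) = (v/\pi)/(y^2+v^2)$ is the Cauchy density of scale $v$. Integrating in $u$ from $-\infty$ to $x$ (using $-\tfrac{1}{\pi}\arctan((t-u)/v)+\tfrac12$ as the antiderivative in $u$ of $C_v(t-u)$) and applying Fubini gives
\[
\tfrac{1}{\pi}\,\im \int_{-\infty}^x s(u+iv)\,du = (F*C_v)(x) =: F_v(x),
\]
the CDF of $X + vZ$ with $X \sim F$ and $Z$ an independent standard Cauchy; the same holds for $(G,t,G_v)$. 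Thus the first term on the RHS of the lemma is $\pi\sup_{x\in I}|F_v(x)-G_v(x)|$, and it remains to compare $F-G$ to $F_v-G_v$.

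Second, the triangle inequality
\[
F(x)-G(x) = [F(x)-F_v(x)] + [F_v(x)-G_v(x)] + [G_v(x)-G(x)]
\]
reduces matters to the two smoothing errors. For $|G_v-G|$, writing it as $\int[G(x-y)-G(x)]C_v(y)\,dy$ and using $|G(x-y)-G(x)| \le \min(c_g|y|,1)$ together with the compact support of $G$ yields $|G_v(x)-G(x)| \le C(c_g)\,v$; any logarithmic factor that appears when estimating $\int\min(c_g|y|,1)C_v(y)dy$ can be absorbed into $C_2$, or removed altogether by exploiting that $G$ is constant outside $[a,b]$.

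The delicate step, and the main obstacle, is controlling $|F-F_v|$: since $F$ has no assumed density bound and $C_v$ has infinite first moment, a direct smoothing estimate is impossible. I would use the classical Bai trick: pick $x_0 \in I$ at which $\sup_x(F(x)-G(x)) =: \Delta_+$ is attained up to $\eps$; then by monotonicity of $F$ and the Lipschitz bound $c_g$ on $G$,
\[
F(y)-G(y) \ge \Delta_+ - c_g(y-x_0) \qquad \text{for } y \ge x_0.
\]
Evaluating $F_v-G_v$ at the shifted point $x_0+\alpha v$, which is a $C_v$-weighted average of $F-G$ concentrated on scale $v$ about $x_0+\alpha v$, gives a lower bound of the form $c_1\Delta_+ - c_2 v$ with $c_1,c_2$ depending only on $c_g$. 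The symmetric argument applied to $\inf_x(F-G)$ closes the loop and produces the stated inequality $\Delta(F,G) \le C_1 \sup_{x\in I}|F_v(x)-G_v(x)| + C_2 v$. The key insight is that the Lipschitz regularity of $G$ can be transferred to $F$ via monotonicity at shifted arguments, so that constants depend only on $c_g$ uniformly over all $F$ — bypassing the impossible task of bounding $|F-F_v|$ directly.
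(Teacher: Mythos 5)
Your approach---representing $\pi^{-1}\im\int_{-\infty}^x s(u+iv)\,du$ as the Cauchy-smoothed distribution function $F_v(x)$ via Fubini, and then running Bai's shift argument at $x_0+Kv$ using the Lipschitz bound on $G$ together with the monotonicity of $F$---is the standard proof of this type of smoothing lemma, and it is the argument carried out in the cited reference \cite{GT:03}; the present paper does not reproduce the proof but defers to that source. Two small inaccuracies, neither fatal: (i) the triangle-inequality decomposition $F-G=(F-F_v)+(F_v-G_v)+(G_v-G)$ and the ensuing estimate of $|G_v-G|$ are vestigial, since the Bai shift you then describe compares $\Delta(F,G)$ directly with $\sup_x|F_v(x)-G_v(x)|$ and never needs $|G_v-G|$ at all; (ii) the remark that a factor $v\log(1/v)$ ``can be absorbed into $C_2$'' is false as stated (a constant cannot absorb a logarithm), though this too is moot for the same reason. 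You should also make the phrase ``concentrated on scale $v$'' precise: the Cauchy kernel has no first moment, so one cuts the $C_v$-average at $|w|\le Kv$, uses $p_K:=\Pr(|Z|\le K)>\tfrac12$ for $K>1$, bounds the tail contribution by $-\Delta(1-p_K)$, and obtains
\begin{equation}
F_v(x_0+Kv)-G_v(x_0+Kv)\ \ge\ (2p_K-1)\,\Delta_+\ -\ c_g K p_K\, v,
\end{equation}
from which the stated bound follows upon applying the symmetric argument to $\inf_x(F-G)$. (Strictly speaking the argument places $x_0+Kv$ in a slight enlargement of $I=[a,b]$ rather than in $I$ itself; this imprecision is already present in the paper's statement of the lemma, which also misprints $t(z)$ as $s_y(z)$.)
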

A proof of Lemma 2.1  in G\"otze, Tikhomirov  \cite{GT:03},  . 
\begin{cor} The following inequality holds, for any $0<v<V$,
 \begin{align}
\Delta(F,G)\le &C_1\int_{-\infty}^{\infty}|(s(u+iV)-t(u+iV))\, |du+
C_2\, v\\&
+C_1\sup_{x\in I}
\left|{\re}\,\left\{\int_{v}^V (s(x+iu)-t(x+iu))du\right\}\right|.
\end{align}
\end{cor}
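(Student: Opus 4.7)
\medskip
\noindent\textbf{Proof proposal.} My plan is to apply Lemma 2.1 at height $v$ and then use Cauchy's theorem on a rectangle with vertical sides at some $-A$ and at the fixed point $x$, and horizontal sides at heights $v$ and $V$. This will trade the horizontal line integral of $s-t$ at height $v$ (which appears in Lemma 2.1) for a short vertical segment of length $V-v$ at $x$, plus a horizontal line integral at the higher level $V$ (which can be bounded in absolute value without a $\sup_x$).

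Set $f(z):=s(z)-t(z)$, analytic on the open upper half-plane. Hypothesis (2.1) together with integration by parts in the definition of the Stieltjes transform gives
$$
 f(z)=\int_{-\infty}^{\infty}\frac{F(x)-G(x)}{(x-z)^2}\,dx,
$$
so $|f(u+iw)|=O(|u|^{-2})$ as $|u|\to\infty$ uniformly in $w\ge v$, and by Fubini
$$
 \int_{\Real}|f(u+iV)|\,du\;\le\;\frac{\pi}{V}\int_{\Real}|F(x)-G(x)|\,dx\;<\;\infty.
$$

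Fix $x\in I$ and $A>0$ large, and apply Cauchy's theorem to $f$ on the rectangle with vertices $-A+iv,\,x+iv,\,x+iV,\,-A+iV$:
$$
 \int_{-A}^{x}f(u+iv)\,du\;+\;i\int_v^V f(x+iu)\,du\;-\;\int_{-A}^{x}f(u+iV)\,du\;-\;i\int_v^V f(-A+iu)\,du\;=\;0.
$$
Taking imaginary parts (using $\im(ig)=\re g$), then letting $A\to\infty$ so that the last summand disappears by the quadratic decay of $f$, yields
$$
 \im\!\int_{-\infty}^{x}f(u+iv)\,du\;=\;-\,\re\!\int_v^V f(x+iu)\,du\;+\;\im\!\int_{-\infty}^{x}f(u+iV)\,du.
$$
The rightmost term is bounded by $\int_{\Real}|f(u+iV)|\,du$, independently of $x$. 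Substituting this identity into the estimate of Lemma 2.1 and taking the supremum over $x\in I$ produces exactly the three terms of the corollary.

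The only genuine obstacle is justifying that the left side of the rectangle contributes $o(1)$ as $A\to\infty$; this is where hypothesis (2.1) is used, via the integration-by-parts representation of $f$ and the resulting $|u|^{-2}$ decay that also guarantees absolute convergence of the horizontal integrals at both heights. Everything else is routine contour manipulation.
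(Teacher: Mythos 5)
Your argument is correct, and it is the natural (and, as far as one can tell, intended) way to deduce the Corollary from Lemma~2.1: the paper itself states the Corollary without proof, referring back to \cite{GT:03}, and what you do---apply Lemma~2.1 at height $v$, then shift the horizontal integral from level $v$ to level $V$ via Cauchy's theorem on a rectangle, paying for this with a vertical segment at $x$ and discarding the far left side by decay---is exactly the standard mechanism used there. The contour identity, the sign in $\im(ig)=\re g$, and the Fubini bound $\int_\Real|f(u+iV)|\,du\le \frac{\pi}{V}\int_\Real|F-G|\,dx$ are all right. One small remark: the claim $|f(u+iw)|=O(|u|^{-2})$ uniformly is slightly more than you need and slightly more than follows instantly from (2.1) alone; what you actually use, and what does follow by dominated convergence from $\int|F-G|<\infty$, is that $\sup_{w\in[v,V]}|f(-A+iw)|\to 0$ as $A\to\infty$, which already kills the left side of the rectangle. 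With that minor tightening, the proof is complete.
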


\section{ The main Lemma}
Let $\xi\ge 0$ be a positive random variables with distribution function $F(x)$.
Let $\varkappa$ be a Rademacher random variable with value $\pm 1$ with porbability $1/2$.
 Consider a random variable $\widetilde\xi:=\varkappa\xi$ and denote its distribution function by $\widetilde F(x)$.
For any $x$, we have
\begin{equation}
 \widetilde F(x)=\frac12(1+\text{\rm sgn}x\,F(x^2))
\end{equation}
This equality implies that
\begin{equation}
 \widetilde p(x):=\frac d{dx}\widetilde F(x)=|x|p(x),
\end{equation}
where
\begin{equation}
 p(x)=\frac d{dx}F(x).
\end{equation}
For the Marchenko--Pastur distribution with parameter $y\in(0,1]$, we have
\begin{equation}
 \widetilde p_y(x)=|x|p_y(x)=\frac1{2\pi y|x|}\sqrt{(x^2-a)(b-x^2)}.
\end{equation}
It is straighforward to check that, for $y\in(0,1]$,
\begin{equation}\label{density}
 \sup_x\widetilde p_y(x)\le \frac1{\pi \sqrt y(1+\sqrt y)}.
\end{equation}
   Note also that the distribution $\widetilde F_y(x)$ has  a support which is contained in the union of the
intervals $[-(1+\sqrt y),-(1-\sqrt y)]\cup[(1-\sqrt y),(1+\sqrt y)]$.

 Introduce the following matrix
\begin{equation}
 \bold H:=\left(\begin{matrix}{\bold O\quad\bold X}\\{\bold X^*\quad\bold O}\end{matrix}\right),
\end{equation}
where $\bold O$ is the matrix with  zero entries only.
Consider the resolvent matrix
\begin{equation}
 \bold R(z)=(\bold H-z\bold I)^{-1},
\end{equation}
where $\bold I$ denotes the identity matrix of order $n+p$.

Let $s_y(z)$ denote the Stieltjes transform of the Marchenko--Pastur distribution function  with parameter $y$. Denote by $\widetilde s_y(z)$ the Stieltjes transform of the distribution function $\widetilde F_y(x)$. It is straighforward to check that
\begin{equation}
 \widetilde s_y(z)=zs_y(z^2).
\end{equation}
For the Stieltjes transform of the expected spectral distribution function of the sample covariance matrix $s_p(z)$
and its ``symmetrization'' $\widetilde s_p(z)$ we have, 
\begin{equation}
 \widetilde s_p(z)=zs_p(z^2).
\end{equation}
From the equation for $s_y(z)$ 
\begin{equation}
 s_y(z)=-\frac1{z+y-1+yzs_y(z)}
\end{equation}
it follows that
\begin{equation}
 \widetilde s_y(z)=-\frac1{z+y\widetilde s_y(z)+\frac{y-1}z}.
\end{equation}
By inversion of the partitioned matrix formula (see \cite{Horn}, p. 18, Section 0.7.3) , we have
\begin{equation}
 \bold R(z)=\left(\begin{matrix}{z(\bold X\bold X^*-z^2\bold I_n)^{-1}\quad \bold X(\bold X^*\bold X-z^2\bold I_p)^{-1}}\\{(\bold X^*\bold X-z^2\bold I_p)^{-1}\bold X^*\quad (\bold X^*\bold X-z^2\bold I_p)^{-1}}\end{matrix}\right)
\end{equation}

This equality implies that
\begin{equation}
 \widetilde s_p(z)=\frac1n\sum_{j=1}^n\E R_{jj}(z)=\frac1{n}\sum_{j=1}^pR_{j+n,j+n}(z)+\frac{y-1}{z}
\end{equation}
and
\begin{equation}
 \frac1{p}\sum_{j=1}^pR_{j+n,j+n}(z)=y\frac1n\sum_{j=1}^nR_{j,j}(z)+\frac{1-y}{z}.
\end{equation}

Tfor the  readers convenient we state here two  Lemmas, which follow from Shur's complement formula
(see, for example, \cite{GT:03}).
Let $\bold A=\Big(a_{kj}\Big)$ denote a 
matrix 
of order $n$ 
and $\bold A_k$ denote the principal sub-matrix of  order $n-1$, i.e. 
$\bold A_k$ is obtained from $\bold A$ by deleting the $k$-th row and the $k$-th
column. Let $\bold A^{-1}=\Big(a^{jk}\Big)$.  Let $\bold a_k'$ 
denote the vector 
obtained from the $k$-th row of $\bold A$ by deleting the $k$-th entry and 
$\bold b_k$ the vector from the $k$-th column by deleting the $k$-th entry.
Let  $\bold I$ with  subindex or without  denote the  identity matrix
of  corresponding size.

\begin{lem}\label{Lemma 3.1}
Assume that $\bold A$ and $\bold A_k$ are
nonsingular. Then we have
$$
a^{kk}=\frac1{a_{kk}-\bold a'_k\bold A_k^{-1}\bold b_k}.
$$
\end{lem}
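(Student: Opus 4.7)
The plan is to reduce to the case where the distinguished index sits in the last row and column, and then invoke the block-inversion identity which is exactly the Schur complement formula.

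First, I would observe that the quantity $a^{kk}$ and the right-hand side are both invariant under the symmetric permutation that sends the $k$-th row and column of $\bold A$ to the $n$-th row and column (this permutation sends $\bold A_k$ to itself up to an analogous permutation of rows and columns, and it sends $\bold a'_k$, $\bold b_k$ to their reordered versions while leaving $a_{kk}$ unchanged). Hence it suffices to treat the case $k=n$, where $\bold A$ admits the block form
\begin{equation*}
\bold A=\begin{pmatrix}\bold A_n & \bold b_n\\ \bold a'_n & a_{nn}\end{pmatrix},
\end{equation*}
with $\bold A_n$ of order $n-1$, $\bold b_n$ an $(n-1)\times 1$ column, and $\bold a'_n$ a $1\times(n-1)$ row.

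Next I would apply the partitioned-matrix inversion formula already referenced in the excerpt (Horn, Section 0.7.3). Since $\bold A_n$ is nonsingular by hypothesis, the $(2,2)$ block of $\bold A^{-1}$ equals the inverse of the Schur complement
\begin{equation*}
S:=a_{nn}-\bold a'_n\bold A_n^{-1}\bold b_n,
\end{equation*}
and by the assumed nonsingularity of $\bold A$ one has $S\neq 0$. Since the $(2,2)$ block here is the scalar entry $a^{nn}$, this gives $a^{nn}=1/S$, which is the claimed identity.

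If one prefers a self-contained verification avoiding the quoted reference, one can alternatively construct the candidate inverse directly: write $\bold A^{-1}$ in block form with unknowns, multiply out $\bold A\bold A^{-1}=\bold I$, and solve the resulting system of four block equations; the last equation $\bold a'_n\bold u+a_{nn} s=1$, together with the expression $\bold u=-\bold A_n^{-1}\bold b_n s$ coming from the $(1,2)$ block, yields $s=1/S$. Either route is essentially immediate; the only subtlety — which is really the entirety of the content — is checking that $S\neq 0$, and this follows from $\det\bold A=\det\bold A_n\cdot S$ together with the hypothesis $\det\bold A\neq 0$, $\det\bold A_n\neq 0$. There is no genuine obstacle here beyond bookkeeping; the lemma is a direct transcription of the Schur complement identity.
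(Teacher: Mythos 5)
Your proof is correct and matches the paper's intent: the paper simply states that the lemma follows from Schur's complement formula (citing Horn--Johnson and \cite{GT:03}), and your argument — permute the $k$-th index to the last position and read off the $(2,2)$ block of the partitioned inverse, noting $S \neq 0$ from $\det\bold A = \det\bold A_n \cdot S$ — is precisely that derivation spelled out. No discrepancy in approach.
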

\begin{lem}\label {Lemma 3.2} Let $z=u+iv$, and $\bold A$ be an $n\times n$
symmetric matrix. Then 
\begin{align}
\Tr (\bold A-z\bold I_n)^{-1}-
\Tr (\bold A_k-z\bold I_{n-1})^{-1}
&=\frac{1+\bold a_k'(\bold A_k-z\bold I_{n-1})^{-2}\bold a_k}{a_{kk}-z-
\bold a_k'(\bold A_k-z\bold I_{n-1})^{-1}\bold a_k}\notag\\&=(1+\bold a_k'(\bold A_k-z\bold I_{n-1})^{-2}\bold a_k)\,a^{kk}.
\end{align}
and 
$$
\Big|\Tr \Big(\bold A-z\bold I_n\Big)^{-1}-\Tr\Big(\bold A_k-z\bold I_{n-1}\Big)
^{-1}\Big|
\le v^{-1}.
$$
\end{lem}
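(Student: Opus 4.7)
The plan is to derive the first identity via Schur's block-inverse formula and then obtain the $v^{-1}$ bound by comparing the numerator $1 + \bold a_k'(\bold A_k - z\bold I)^{-2}\bold a_k$ with $\im(1/a^{kk})$ through the spectral decomposition of $\bold A_k$.

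First, I permute rows and columns so that the $k$-th row and column of $\bold A - z\bold I$ end up in the bottom-right block; writing $B := (\bold A_k - z\bold I_{n-1})^{-1}$ and $c := a_{kk} - z - \bold a_k'B\bold a_k$, Schur's block-inverse formula yields
$$(\bold A - z\bold I)^{-1} = \begin{pmatrix} B + c^{-1}B\bold a_k\bold a_k'B & -c^{-1}B\bold a_k \\ -c^{-1}\bold a_k'B & c^{-1}\end{pmatrix},$$
which both recovers $a^{kk} = c^{-1}$ (i.e.\ Lemma~\ref{Lemma 3.1} applied to $\bold A - z\bold I$) and, upon taking the trace, using the cyclic identity $\Tr(B\bold a_k\bold a_k'B) = \bold a_k'B^2\bold a_k$, and subtracting $\Tr B$, delivers the displayed trace identity.

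For the bound, I diagonalize $\bold A_k = \sum_j \mu_j e_je_j'$ and expand $\bold a_k'B^2\bold a_k = \sum_j |e_j'\bold a_k|^2/(\mu_j - z)^2$. The pointwise estimate $|(\mu_j-z)^{-2}| = ((\mu_j - u)^2 + v^2)^{-1}$ yields
$$|\bold a_k'B^2\bold a_k| \le \bold a_k'\bigl[(\bold A_k - u\bold I)^2 + v^2\bold I\bigr]^{-1}\bold a_k,$$
while the same spectral sum gives $\im c = -v\bigl(1 + \bold a_k'[(\bold A_k - u\bold I)^2 + v^2\bold I]^{-1}\bold a_k\bigr)$. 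Hence
$$1 + \bold a_k'\bigl[(\bold A_k - u\bold I)^2 + v^2\bold I\bigr]^{-1}\bold a_k = \frac{|\im c|}{v} \le \frac{|c|}{v} = \frac{1}{v\,|a^{kk}|},$$
and combining with the previous display gives $|1 + \bold a_k'B^2\bold a_k| \le 1/(v\,|a^{kk}|)$; multiplying by $|a^{kk}|$ produces the claimed $v^{-1}$.

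The main obstacle is that the crude estimate $|\bold a_k'B^2\bold a_k| \le \|\bold a_k\|^2/v^2$ is useless because $\|\bold a_k\|^2$ is not under our control. The fix is to bound $\bold a_k'B^2\bold a_k$ by the positive quantity $\bold a_k'[(\bold A_k - u\bold I)^2 + v^2\bold I]^{-1}\bold a_k$, which equals $|\im c|/v$ exactly; this lets the denominator $|c| = 1/|a^{kk}|$ cancel the factor $|a^{kk}|$ at the end, and is the only non-routine ingredient.
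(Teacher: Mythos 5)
Your proof is correct and follows the standard route implicit in the paper's citation of \cite{GT:03}: the Schur block-inverse formula yields both $a^{kk}=c^{-1}$ and the trace identity, and the spectral estimate $|\bold a_k'B^2\bold a_k|\le\bold a_k'(BB^*)\bold a_k=|\im c|/v-1$ (so that $|1+\bold a_k'B^2\bold a_k|\le|\im c|/v\le|c|/v$) gives exactly the $v^{-1}$ bound after multiplying by $|a^{kk}|=1/|c|$. The key non-routine step you single out — trading the resolvent-squared quadratic form for $\bold a_k'[(\bold A_k-u\bold I)^2+v^2\bold I]^{-1}\bold a_k$, which is tied to $\im c$, rather than using the useless crude bound $\|\bold a_k\|^2/v^2$ — is precisely the right observation, and the argument applies verbatim to the Hermitian case needed for $\bold H$.
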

Applying Lemma \ref{Lemma 3.1} with $\bold A=\bold W$ 
  we may write, for $j=1,\ldots,n$
\begin{align}\label{ineq3.1}
R_{j,j}&=-\frac1{z+y\widetilde s_p(z)+\frac{y-1}z-\varepsilon_j}=-\frac1{z+y\widetilde s_p(z)+\frac{y-1}z}\notag\\&+
\frac{\varepsilon_j}{(z+y\widetilde s_p(z)+\frac{y-1}z)(z+y\widetilde s_p(z)+\frac{y-1}z-\varepsilon_j)}\notag\\&=
-\frac1{z+y\widetilde s_p(z)+\frac{y-1}z}\left(1-\varepsilon_jR_{j,j}\right),
\end{align}
where
\begin{equation}\label{rep5.1}
 \varepsilon_j=\varepsilon_j^{(1)}+\varepsilon_j^{(2)}+\varepsilon_j^{(3)}+\varepsilon_j^{(4)}
\end{equation}
with
\begin{align}
 \varepsilon_j^{(1)}&=\frac1p\sum_{1\le k\ne l\le p}X_{jk}X_{jl}^*R^{(j)}_{k+n,l+n},\quad
\varepsilon_j^{(2)}=\frac1p\sum_{k=1}^p(|X_{j,k}|^2-1)R^{(j)}_{k+n,k+n}\notag\\
\varepsilon_j^{(3)}&=\frac1p\sum_{k=1}^pR^{(j)}_{k+n,k+n}-\frac1p\sum_{k=1}^pR_{k+n,k+n},\quad
\varepsilon_j^{(4)}=\frac{1}p\sum_{k=1}^pR_{k+n,k+n}-\frac{1}p\E\left(\sum_{k=1}^pR_{k+n,k+n}\right).\notag
\end{align}

This implies that
\begin{equation}\label{qq}
\widetilde s_p(z)=-\frac1{z+y\widetilde s_p(z)+\frac{y-1}z}+\delta_p(z),
\end{equation}
where
\begin{equation}
 \delta_p(z)=\frac1{n\;(z+y\widetilde s_p(z)+\frac{y-1}z)}\sum_{j=1}^n\varepsilon_jR_{jj}.
\end{equation}

Throughout this paper we shall consider 
$z=u+iv$ with $a\le |u|\le b$ and 
\newline$0<v<C$.

The main result of this Section is
\begin{lem}\label{Lemma 3.4} Let
$$
\text{\rm{Im}}\,\Big\{y\delta_p(z)+z +\frac{y-1}z\Big\}\ge 0.
$$
Then 
$$
\left|z+\frac{y-1}z+ys_p(z)\right|\ge 1.
$$
\end{lem}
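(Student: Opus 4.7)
The plan is to recast equation~\eqref{qq} as a quadratic identity for the quantity
$$A := A(z) = z + \frac{y-1}{z} + y\widetilde{s}_p(z),$$
and then read off the modulus bound from an imaginary-part comparison in polar form.

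First I would rewrite \eqref{qq} as $(\widetilde{s}_p - \delta_p)\,A = -1$, substitute $y\widetilde{s}_p = A - z - (y-1)/z$, and clear denominators to obtain the clean algebraic relation
$$A^2 - Z\,A + y = 0, \qquad Z := z + \frac{y-1}{z} + y\delta_p(z),$$
equivalently $A + y/A = Z$. This is the pivotal identity; the rest of the argument is an analysis of it.

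Next I would observe that $\mathrm{Im}\,A > 0$ throughout the admissible $z$-range. Indeed $\mathrm{Im}\,z = v > 0$, $\mathrm{Im}((y-1)/z) = (1-y)v/|z|^2 \ge 0$ since $y \le 1$, and $\mathrm{Im}\,\widetilde{s}_p(z) > 0$ as a Stieltjes transform. Writing $A = \rho\,e^{i\theta}$ with $\theta \in (0,\pi)$ (so $\sin\theta > 0$), the identity $A + y/A = Z$ yields
$$\mathrm{Im}\,Z = (\rho - y/\rho)\sin\theta.$$
The hypothesis $\mathrm{Im}\,Z \ge 0$ combined with $\sin\theta > 0$ then forces $\rho^2 \ge y$, i.e.\ $|A|^2 \ge y$. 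Equivalently, by Vieta, the companion root $A' = y/A$ has $\mathrm{Im}\,A' \le 0$, so it is $A$ that is the larger of the two roots in modulus, and $|A|\,|A'| = y$ gives the same conclusion $|A| \ge \sqrt{y}$.

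The main obstacle I anticipate is promoting this bound to the stated $|A| \ge 1$ when $y < 1$. The polar argument is tight: $|A| = \sqrt{y}$ is approached as $Z$ tends to the spectral-edge values $\pm 2\sqrt{y}$ (where the discriminant $Z^2 - 4y$ vanishes), so the imaginary-part comparison by itself cannot give more than $\sqrt{y}$. Any strengthening must exploit additional structure beyond the single inequality $\mathrm{Im}\,Z \ge 0$ — for example the standing restriction of $|u|$ to the admissible spectral interval, the sign/size information on $\mathrm{Re}\,Z$ coming from $z + (y-1)/z$, or a coupled real/imaginary estimate exploiting the explicit form of $\delta_p$. In the critical case $y = 1$, however, the two-line polar argument above already delivers $|A| \ge 1$ exactly as stated, and this is the regime that the paper's analysis highlights as the hardest, so the lemma is verified in its most delicate case even by the straightforward route.
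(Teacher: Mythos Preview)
Your approach is exactly the paper's: both arguments take the imaginary part of the identity obtained from \eqref{qq}, which in your notation reads $A + y/A = Z$, and use $\im A > 0$ together with the hypothesis $\im Z \ge 0$ to conclude a lower bound on $|A|$. The paper's two-line proof just writes this out as
\[
\im A \;=\; \frac{y\,\im A}{|A|^2} \;+\; \im Z
\]
(the printed version drops the factor $y$ in two places), which is precisely your polar computation $\im Z = (\rho - y/\rho)\sin\theta$.

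Your diagnosis of the ``obstacle'' is correct, and it is not a gap in your argument but a typo in the lemma's statement: the imaginary-part comparison yields $|A|\ge \sqrt{y}$, not $|A|\ge 1$, and this is sharp for the method. The paper's own proof gives nothing more. That $\sqrt{y}$ is the intended bound is confirmed by how the lemma is actually used: in the proof of Lemma~\ref{lem5.1} the borderline case is resolved by concluding $|t(z)|=\sqrt{y}$, and the standing assumption $y\ge\theta>0$ is what turns this into a uniform lower bound $a_1$ for condition~\eqref{con5.2}. So you have proved what the paper proves, by the same route; the discrepancy with the stated ``$\ge 1$'' is the paper's slip, not yours, and you need not look for extra structure to close it.
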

\begin{proof} From representation (\ref{qq}) it follows that
\begin{equation}
 \im\left\{ys_p(z)+z+\frac{y-1}z\right\}=\frac{\im\left\{ys_p(z)+z+\frac{y-1}z\right\}}
{|ys_p(z)+z+\frac{y-1}z|^2}+\im\{\delta_p(z)+z+\frac{y-1}z\}.
\end{equation}
This equality concludes the proof.
\end{proof}
\section{Bounds for $\delta_p(z)$}
We start from the simple bound for the $\delta_p(z)$.
\begin{lem}\label{Lem3.0}Under the conditions of Theorem \ref{thm1.1} the following bound holds for $1\ge v\ge CM^{1/2}n^{-1/2}$
 \begin{equation}
  |\delta_p(z)|\le \frac 1{|z+y\widetilde s_p(z)+\frac{y-1}z|^2}\frac C{nv^4}.
 \end{equation}
\end{lem}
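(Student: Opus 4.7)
The key idea is that a direct application of the trivial bound $|R_{jj}|\le v^{-1}$ only yields a factor of $|z+y\widetilde s_p(z)+(y-1)/z|^{-1}$ from the explicit prefactor in $\delta_p$. To obtain the \emph{squared} denominator one must extract a second such factor from each $R_{jj}$ itself. Starting from the self-consistency identity $(\ref{ineq3.1})$ one writes
\begin{equation}\notag
\varepsilon_j R_{jj}=-\frac{\varepsilon_j}{z+y\widetilde s_p(z)+\frac{y-1}z}+\frac{\varepsilon_j^{2}R_{jj}}{z+y\widetilde s_p(z)+\frac{y-1}z},
\end{equation}
take expectations, sum over $j$, and substitute into the definition of $\delta_p(z)$. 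This produces the target factor $|z+y\widetilde s_p(z)+\frac{y-1}z|^{-2}$, and reduces the problem to establishing
\begin{equation}\notag
\Bigl|\sum_{j=1}^n\E\varepsilon_j\Bigr|+\Bigl|\sum_{j=1}^n\E[\varepsilon_j^{\,2}R_{jj}]\Bigr|\le \frac{Cn}{v^{4}}\cdot\frac{1}{n}=\frac{C}{v^{4}}\quad\text{(after division by $n$).}
\end{equation}

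For the mean term note that $\E\varepsilon_j^{(1)}=\E\varepsilon_j^{(2)}=\E\varepsilon_j^{(4)}=0$ (off-diagonal moments, centering of $|X_{jk}|^{2}$, and the definition of $\varepsilon_j^{(4)}$), so only $\E\varepsilon_j^{(3)}$ survives. By Lemma \ref{Lemma 3.2} applied to the rank-one perturbation of $\bold H$ obtained by removing row/column $j$, one has the deterministic bound $|\varepsilon_j^{(3)}|\le C/(pv)$, giving $|\sum_j\E\varepsilon_j|\le Cn/(pv)\le C/(\theta v)$, which is dominated by $C/v^{4}$ in the range $v\le 1$.

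For the quadratic term I would use $|R_{jj}|\le v^{-1}$ and control $\sum_j\E|\varepsilon_j^{(k)}|^{2}$ separately for each $k$. Using that $\bold X_{j,\cdot}$ is independent of the sub-resolvent $\bold R^{(j)}$ and the Frobenius-type identity $\sum_l|R^{(j)}_{k+n,l+n}|^{2}=[R^{(j)}(R^{(j)})^{*}]_{k+n,k+n}\le v^{-2}$, a direct second-moment computation gives
\begin{equation}\notag
\E|\varepsilon_j^{(1)}|^{2}\le \frac{C}{pv^{2}},\qquad \E|\varepsilon_j^{(2)}|^{2}\le \frac{CM_4}{pv^{2}}.
\end{equation}
The bound $|\varepsilon_j^{(3)}|\le C/(pv)$ is already at hand. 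For $\varepsilon_j^{(4)}$ (which is in fact independent of $j$) I would apply a standard martingale-difference decomposition with respect to the filtration generated by the successive rows of $\bold X$; each increment is bounded by $C/(pv)$ by a rank-one perturbation argument, yielding $\E|\varepsilon_j^{(4)}|^{2}\le Cn/(pv)^{2}$. Summing over $j$ and using $p=yn\ge\theta n$ one obtains $\sum_j\E|\varepsilon_j|^{2}\le C/v^{2}$, hence $|\sum_j\E[\varepsilon_j^{\,2}R_{jj}]|\le C/v^{3}\le C/v^{4}$.

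The main obstacle is the martingale/concentration estimate for $\varepsilon_j^{(4)}$: verifying that a change in a single column of $\bold X$ alters $\frac1p\sum_k R_{k+n,k+n}$ by only $O(1/(pv))$, which requires a careful rank-one perturbation argument for the block resolvent $\bold R(z)$ written out via the partitioned-matrix formula. The other three terms are straightforward variance computations once the independence of $\bold X_{j,\cdot}$ from $\bold R^{(j)}$ is exploited. Combining all bounds and substituting into $\delta_p(z)$ produces the claimed inequality.
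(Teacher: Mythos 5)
Your proposal is correct and follows essentially the same route as the paper: substituting the self-consistency identity $(\ref{ineq3.1})$ back into the definition of $\delta_p(z)$ to extract the second factor of $|z+y\widetilde s_p(z)+\frac{y-1}{z}|^{-1}$, noting $\E\varepsilon_j^{(1)}=\E\varepsilon_j^{(2)}=\E\varepsilon_j^{(4)}=0$ so that only $\varepsilon_j^{(3)}$ contributes to the mean, and controlling $\sum_j\E|\varepsilon_j|^2|R_{jj}|$ via the crude bound $|R_{jj}|\le v^{-1}$ together with second-moment estimates on the four $\varepsilon_j^{(\nu)}$, including the martingale-difference argument with interlacing for $\varepsilon_j^{(4)}$. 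The paper invokes its Lemmas 4.2--4.4 (proved subsequently) for those second-moment bounds and arrives at $C/(nv^3)$, slightly sharper than the stated $C/(nv^4)$; your reconstruction reproduces those bounds directly (using the cruder $\Vert R^{(j)}\Vert\le v^{-1}$ in place of the $v^{-1}\,\mathrm{Im}\,\mathrm{Tr}\,R^{(j)}$ refinement the paper uses, which is harmless at this stage) and lands on the same $v^{-3}\le v^{-4}$ conclusion.
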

\begin{proof}
 Note that 
\begin{equation}
 |\delta_p(z)|\le \frac 1{|z+y\widetilde s_p(z)+\frac{y-1}z|^2}(\frac1n\sum_{j=1}^n|\E \varepsilon_{j}|+\frac1n\sum_{j=1}^n\E \varepsilon_{j}|^2|R_{j,j}|).
\end{equation}
Using inequalities (\ref{ineq01}), (\ref{ineq01a}), (\ref{ineq01b}), and (\ref{ineq01c}) below and inequality $|R_{j,j}|\le 1/v$, we get
\begin{align}
 |\delta_p(z)|&\le \frac 1{|z+y\widetilde s_p(z)+\frac{y-1}z|^2}(\frac1{nv}+\frac1{nv}\sum_{j=1}^n\E |\varepsilon_{(j)}|^2\notag\\ &\le \frac 1{|z+y\widetilde s_p(z)+\frac{y-1}z|^2}(\frac1{nv}+
\frac C{nv^3})
\end{align}
Thus the Lemma is proved.
\end{proof}

In this Section we give  bounds for remainder term $\delta_p(z)$ in the equation (\ref{qq}).We first start with  bounds assuming that there exist positive constants $a_1$, $a_2$ such that
\begin{equation}\label{con5.2}
a_1\le\left|z+\frac{y-1}z+ys_p(z)\right|\le a_2.
\end{equation}
\begin{lem}\label{lem:4.1}
There exists a positive absolute constant $C$ such that,  for $v\ge cn^{-1}$ with some other positive absolute constant $c$,
\begin{equation}\label{ineq01}
 \E|\varepsilon_j^{(1)}|^2\le \frac{C(1+|s_p(z)|)}{nv}
\end{equation}
\begin{equation}\label{ineq01a}
 \E|\varepsilon_j^{(2)}|^2\le \frac{C(1+|s_p(z)|)}{nv}
\end{equation}
and
\begin{equation}\label{ineq02}
 \E|\varepsilon_j^{(1)}|^4\le \frac{CM_4^2(1+|\widetilde s_p(z)|)}{n^2v^2}.
\end{equation}
\end{lem}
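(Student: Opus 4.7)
\medskip

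The key structural fact is that the resolvent $\bold R^{(j)}(z)$ of the matrix obtained from $\bold H$ by deleting the $j$-th row and column is independent of the $j$-th row $(X_{jk})_{k=1}^p$ of $\bold X$, so all four estimates can be obtained by first conditioning on $\bold R^{(j)}$ and applying elementary moment computations to the remaining randomness in $X_{jk}$. The other ingredient I will use repeatedly is the Hermitian resolvent identity $\bold R^{(j)}\bold R^{(j)*}=v^{-1}\im\bold R^{(j)}$, which gives
$$
 \sum_{k,l=1}^p|R^{(j)}_{k+n,l+n}|^2\le \Tr\bigl(\bold R^{(j)}\bold R^{(j)*}\bigr)=\frac{\im\Tr\bold R^{(j)}(z)}{v},\qquad
 \sum_{k=1}^p|R^{(j)}_{k+n,k+n}|^2\le\frac{\im\sum_{k=1}^pR^{(j)}_{k+n,k+n}}{v},
$$
and combined with Lemma~\ref{Lemma 3.2} and the identity $\widetilde s_p(z)=\frac1n\E\Tr\bold R(z)$ (modulo the fixed shift $(y-1)/z$), the right-hand sides are bounded by $Cn(1+|s_p(z)|)/v$ (using $p\asymp n$ since $y\ge\theta$).

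For \eqref{ineq01} I would condition on $\bold R^{(j)}$ and exploit $\E X_{jk}=0$, $\E|X_{jk}|^2=1$, and independence of distinct entries in row $j$. Only the diagonal pairing $k=k'$, $l=l'$ survives in $\E|\varepsilon_j^{(1)}|^2$, giving
$$
 \E|\varepsilon_j^{(1)}|^2=\frac1{p^2}\sum_{k\ne l}\E|R^{(j)}_{k+n,l+n}|^2\le \frac1{p^2 v}\,\E\,\im\Tr\bold R^{(j)}(z),
$$
from which the bound $C(1+|s_p(z)|)/(nv)$ follows. For \eqref{ineq01a} the same conditioning reduces $\E|\varepsilon_j^{(2)}|^2$ to $p^{-2}\sum_k\E[(|X_{jk}|^2-1)^2]\,\E|R^{(j)}_{k+n,k+n}|^2\le M_4p^{-2}\sum_k\E|R^{(j)}_{k+n,k+n}|^2$, and the second resolvent identity above closes the estimate (with $M_4$ absorbed into the constant since $M_4\ge1$).

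For the fourth moment bound \eqref{ineq02} on $\varepsilon_j^{(1)}$, after conditioning on $\bold R^{(j)}$ the task becomes estimating $\E|\sum_{k\ne l}a_{kl}X_{jk}\bar X_{jl}|^4$ with $a_{kl}=R^{(j)}_{k+n,l+n}$. Expanding the product of two copies of $\varepsilon_j^{(1)}$ and two copies of its conjugate gives a sum over eight-tuples of indices; the only non-vanishing terms are those in which each distinct index appears at least twice. The dominant pairings (each index occurring exactly twice) give $2(\sum_{k\ne l}|a_{kl}|^2)^2$, and the remaining pairings in which some index occurs four times (producing $\E|X_{jk}|^4$) are bounded by $M_4\sum_{k,l}|a_{kl}|^2\cdot\max_k\sum_l|a_{kl}|^2$, controlled via Cauchy–Schwarz against $\Tr\bold R^{(j)}\bold R^{(j)*}$. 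Combining,
$$
 \E|\varepsilon_j^{(1)}|^4\le \frac{CM_4^2}{p^4}\E\!\left(\frac{\im\Tr\bold R^{(j)}}{v}\right)^{\!2}\le \frac{CM_4^2(1+|\widetilde s_p(z)|)}{n^2v^2},
$$
using boundedness of $|\widetilde s_p(z)|$ to collapse the square to a first power in the stated form.

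The main obstacle is the bookkeeping in \eqref{ineq02}: one must classify all non-vanishing index-coincidence patterns in the eight-fold sum, and for each, extract the correct scaling in $M_4$ while estimating the resulting combinatorial sum of entries of $\bold R^{(j)}$ by a product of Frobenius norms via Cauchy–Schwarz. The restriction $k\ne l$ in $\varepsilon_j^{(1)}$ is convenient because it kills pairings of the form $k_1=l_1$; however, the off-diagonal structure still leaves several topologically distinct pairings to handle, and checking that each of them is dominated by $(\Tr \bold R^{(j)}\bold R^{(j)*})^2$ up to a factor $M_4^2$ is the only non-routine part of the argument.
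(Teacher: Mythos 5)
Your proposal follows essentially the same route as the paper: for \eqref{ineq01} and \eqref{ineq01a} you condition on $\bold R^{(j)}$, use independence and the second-moment structure of row $j$, and bound the resulting double sum by $\frac1v\,\im\Tr\bold R^{(j)}$ via the resolvent identity, then transfer $\E\im\Tr\bold R^{(j)}$ to $1+|\widetilde s_p(z)|$ through Lemma~\ref{Lemma 3.2}; for \eqref{ineq02} you square the Frobenius-norm bound and proceed identically, which is exactly the chain of inequalities (\ref{ineq4.5}) onward in the paper's argument. The one place where you add genuine content is the fourth-moment combinatorics: the paper goes directly to $\E|\varepsilon_j^{(1)}|^4\le CM_4^2p^{-4}\,\E(\sum_{k,l}|R^{(j)}_{k+n,l+n}|^2)^2$ without enumerating the non-vanishing index patterns, while you classify the pairings and check that the ``heavy'' ones (an index repeated four times, yielding a single $M_4$) are also dominated by $(\Tr\bold R^{(j)}\bold R^{(j)*})^2$ after Cauchy--Schwarz; this is the right bookkeeping and makes the constant $M_4^2$ legible. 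Two small remarks: in \eqref{ineq01a} your derivation naturally produces a factor $M_4$ from $\E(|X_{jk}|^2-1)^2\le M_4$, and it cannot honestly be ``absorbed into an absolute constant''; the bound should really read $CM_4(1+|\widetilde s_p(z)|)/(nv)$, which is an imprecision already present in the paper's statement (where $C$ is claimed absolute). Second, the final step you describe as ``collapsing the square to a first power'' by boundedness of $|\widetilde s_p(z)|$ is exactly the same leap the paper makes between its penultimate display and the stated \eqref{ineq02}; strictly, one obtains $(1+|\widetilde s_p(z)|)^2$ in the numerator, and reducing to a first power needs an a priori bound on $|\widetilde s_p(z)|$ (available under \eqref{con5.2}, which is what this lemma feeds into). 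So your proposal reproduces the paper's argument faithfully, with the fourth-moment combinatorics made more explicit and the $M_4$-dependence tracked more honestly.
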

\begin{proof}
Consider inequality (\ref{ineq01}). We have
\begin{equation}\label{ineq4.5}
\E|\varepsilon_j^{(1)}|^2\le\frac2{p^2}\sum_{k,l=1}^p\E|R^{(j)}_{k,l}|^2\le \frac1{p^2}\E\Tr\bold R^{(j)}(\bold R^{(j)})^*\le \frac2{p^2v}\E\im\Tr\bold R^{(j)}.
\end{equation}

Applying Lemma \ref{Lemma 3.2}, we get
\begin{equation}
 |\Tr\bold R-\Tr\bold R^{(j)}|\le 1/v.
\end{equation}Note that
\begin{equation}
 \frac1{2n}\E\im\Tr \bold R(z)\le (1+y)|\widetilde s_p(z)|+\left|\im\left\{\frac{1-y}{z}\right\}\right|.
\end{equation}
It is straighforward to check that
\begin{equation}
 \left|\im\left\{\frac{1-y}{z}\right\}\right|\le 1
\end{equation}
The last inequalities together conclude the proof of inequality (\ref{ineq01}).
The proof of inequality (\ref{ineq01a}) is similar.
Furthermore,
\begin{equation}
 \E|\varepsilon_j^{(1)}|^4\le\frac {CM_4^2}{p^4}\E\left(\sum_{k,l=1}^p|R^{(j)}_{k,l}|^2\right)^2
\le\frac {CM_4^2}{p^2v^2}\E\left(\frac1p\,\im\Tr\bold R^{(j)}\right)^2.
\end{equation}
Similar to inequality (\ref{ineq01}) we get
\begin{equation}
 \E|\varepsilon_j^{(1)}|^4\le \frac {CM_4^2(1+|\widetilde s_p(z)|)^2}{p^2v^2}
\end{equation}
Thus the Lemma is proved.
\end{proof}
\begin{lem}\label{lem4.1}
For any $j=1,\ldots, n$ the following inequality
\begin{equation}\label{ineq01b}
 |\varepsilon_j^{(3)}|\le \frac1{nv}
\end{equation}
holds.
\end{lem}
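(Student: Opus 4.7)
The plan is to use the Schur-complement (block matrix inversion) identity to express $R^{(j)}_{k+n,k+n}-R_{k+n,k+n}$ in closed form, and then control the resulting partial trace by the Ward identity for Hermitian resolvents.

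First I would partition $\bold H-z\bold I$ so that its $j$-th row and column are isolated from the submatrix $\bold M:=\bold H^{(j)}-z\bold I$. The block-inversion formula then gives, for every $k\ne j$, the expansion $R_{k,k}=R^{(j)}_{k,k}+(\bold M^{-1}\bold h_j)_k\,R_{jj}\,(\bold h_j^*\bold M^{-1})_k$, while the off-diagonal blocks of the same inverse yield $R_{k,j}=-R_{jj}(\bold M^{-1}\bold h_j)_k$ and $R_{j,k}=-R_{jj}(\bold h_j^*\bold M^{-1})_k$, where $\bold h_j$ is the $j$-th column of $\bold H-z\bold I$ with the $j$-th entry deleted. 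Eliminating $(\bold M^{-1}\bold h_j)_k$ and $(\bold h_j^*\bold M^{-1})_k$ produces the clean identity
\begin{equation*}
R^{(j)}_{k,k}-R_{k,k}=-\frac{R_{j,k}\,R_{k,j}}{R_{jj}}.
\end{equation*}
Setting $k=l+n$ and summing over $l=1,\ldots,p$ gives $\varepsilon_j^{(3)}=-\dfrac{1}{p\,R_{jj}}\sum_{l=1}^{p}R_{j,l+n}R_{l+n,j}$.

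To control the numerator I would apply Cauchy--Schwarz and dominate each of $\sum_{l=1}^p|R_{j,l+n}|^2$ and $\sum_{l=1}^p|R_{l+n,j}|^2$ by the corresponding full sums. The Ward identity $(\bold R\bold R^*)_{jj}=(\bold R^*\bold R)_{jj}=v^{-1}\im R_{jj}$, which follows from $\bold R(z)\bold R(\bar z)=(z-\bar z)^{-1}[\bold R(z)-\bold R(\bar z)]$ for Hermitian $\bold H$, yields $\sum_m|R_{j,m}|^2=\sum_m|R_{m,j}|^2=v^{-1}\im R_{jj}$, and therefore
\begin{equation*}
\Bigl|\sum_{l=1}^{p} R_{j,l+n}R_{l+n,j}\Bigr|\le v^{-1}\im R_{jj}.
\end{equation*}
Combining with the elementary inequality $|R_{jj}|\ge\im R_{jj}>0$ (valid whenever $v>0$), one factor of $\im R_{jj}$ cancels against $|R_{jj}|$ in the denominator, and the bound $|\varepsilon_j^{(3)}|\le (pv)^{-1}$ follows. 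Since the standing hypothesis $y\ge\theta$ forces $p\ge\theta n$, this is at most $C(\theta)/(nv)$, matching the claim (and recovering $(nv)^{-1}$ exactly in the delicate case $y=1$, which is the one the paper advertises as hardest).

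The whole argument is deterministic: no moment assumption on $X_{jk}$ enters, only the Hermitian structure of $\bold H$ and the positivity of $v$. There is no serious obstacle, and the only delicate point is the paired cancellation of two instances of $\im R_{jj}$ --- one produced by the Ward identity in the numerator, the other by $|R_{jj}|\ge\im R_{jj}$ in the denominator; omitting this pairing would leave the ratio $\im R_{jj}/|R_{jj}|$ unexploited and prevent the factor $n^{-1}$ required for the application to equation~(\ref{qq}).
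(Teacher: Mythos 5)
Your argument is correct and is in the same spirit as the paper's one--line proof (a Schur--complement identity that compares $\bold R$ with $\bold R^{(j)}$), but it is actually more careful than the paper's cited proof. The paper simply invokes Lemma~\ref{Lemma 3.2} with $\bold A=\bold H$, which bounds the \emph{full} trace difference $|\Tr\bold R-\Tr\bold R^{(j)}|\le v^{-1}$; yet $\varepsilon_j^{(3)}$ involves only the \emph{partial} trace over the indices $n+1,\dots,n+p$, so some extra step (relating partial to full trace, or an interlacing/Ward-type argument) is needed and the paper does not spell it out. Your route supplies exactly that missing step: the identity $R^{(j)}_{kk}-R_{kk}=-R_{jk}R_{kj}/R_{jj}$, summed over the lower block, together with the Ward identity $\sum_m|R_{jm}|^2=v^{-1}\im R_{jj}$ (whose proof you correctly reduce to the Hermiticity of $\bold H$), lets you bound the partial sum directly and finish with $\im R_{jj}\le|R_{jj}|$. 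This is a cleaner and fully deterministic argument, and the observation that no moment conditions enter is apt.

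The one small discrepancy worth flagging: your computation delivers $|\varepsilon_j^{(3)}|\le (pv)^{-1}$, not $(nv)^{-1}$. Since $y=p/n\le 1$ here, $(pv)^{-1}\ge(nv)^{-1}$, so your bound is \emph{weaker} by a factor $y^{-1}$, not (as your last paragraph suggests) an exact match at $y=1$. Under the standing hypothesis $y\ge\theta$ this only costs a constant $\theta^{-1}$, which is harmless for every downstream use of the lemma (they all absorb an unspecified constant $C$), but it does mean the literal bound $1/(nv)$ in the statement is not what your argument produces. Given that the paper's $\frac1p$ normalisation of $\varepsilon_j^{(3)}$ and the unnormalised definition of $\bold H$ are themselves not entirely consistent, this constant-level mismatch is almost certainly an artefact of the paper's bookkeeping rather than a defect of your proof.
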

\begin{proof}
 The result follows immediately from Lemma \ref{Lemma 3.2} with $\bold A= \bold H$. 
\end{proof}

\begin{lem}\label{lem:4.2}The follwoing bound holds for all $v>0$
\begin{equation}\label{ineq01c}
 \E|\varepsilon_j^{(4)}|^2\le \frac 4{nv^2}.
\end{equation}

 There exist  positive constants $c$ and $C$ depending on $a_1$ and $a_2$ such that for any $v\ge cn^{-\frac12}$
\begin{equation}\label{ineq10}
 \E|\varepsilon_j^{(4)}|^2\le \frac {CM_4(1+|\widetilde s_p(z)|)}{n^2v^3}
\end{equation}
and
\begin{equation}\label{110}
 \E|\varepsilon_j^{(4)}|^3\le \frac {CM_4(1+|\widetilde s_p(z)|)}{n^{\frac52}v^4}
\end{equation}
and
\begin{equation}\label{ineq11}
 \E|\varepsilon_j^{(4)}|^4\le \frac {CM_4(1+|\widetilde s_p(z)|)}{n^3v^5}.
\end{equation}
\end{lem}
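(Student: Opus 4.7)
The plan is to use a martingale-difference decomposition of $\Tr\bold R(z)$ along the rows of $\bold X$. Combining $\Tr\bold R=\sum_{j=1}^n R_{jj}+\sum_{k=1}^p R_{k+n,k+n}$ with the identity
\begin{equation*}
\tfrac1p\sum_{k=1}^p R_{k+n,k+n}(z)=y\tfrac1n\sum_{j=1}^n R_{jj}(z)+\tfrac{1-y}{z}
\end{equation*}
stated earlier yields $\Tr\bold R=(1+y^2)\sum_{j=1}^n R_{jj}+p(1-y)/z$, and hence, after centering,
\begin{equation*}
\varepsilon_j^{(4)}=\frac{y}{n(1+y^2)}\sum_{i=1}^n D_i,\qquad D_i:=\mathbf{E}_i\Tr\bold R-\mathbf{E}_{i-1}\Tr\bold R,
\end{equation*}
where $\mathbf{E}_i[\,\cdot\,]:=\E[\,\cdot\,|\,\mathcal{F}_i]$ and $\mathcal{F}_i$ is the $\sigma$-algebra generated by the first $i$ rows of $\bold X$. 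In particular $|\varepsilon_j^{(4)}|\le n^{-1}|\sum_i D_i|$, the $D_i$ form an orthogonal martingale-difference system, and since the resolvent $\bold R^{(i)}$ of $\bold H$ with row and column $i$ removed is independent of row $i$, $D_i=(\mathbf{E}_i-\mathbf{E}_{i-1})(\Tr\bold R-\Tr\bold R^{(i)})$.

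For (\ref{ineq01c}), apply Lemma \ref{Lemma 3.2} to $\bold H$ directly: $|\Tr\bold R-\Tr\bold R^{(i)}|\le 1/v$ pointwise, hence $|D_i|\le 2/v$. Orthogonality gives $\E|\sum_i D_i|^2=\sum_i\E|D_i|^2\le 4n/v^2$, and division by $n^2$ yields the bound.

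For (\ref{ineq10}), assume condition (\ref{con5.2}). The second equality in Lemma \ref{Lemma 3.2} gives
\begin{equation*}
\Tr\bold R-\Tr\bold R^{(i)}=R_{ii}\bigl(1+\bold a_i^{*}(\bold R^{(i)})^2\bold a_i\bigr),
\end{equation*}
with $\bold a_i=(X_{i,1},\ldots,X_{i,p})^{*}/\sqrt{n}$. Split $\bold a_i^{*}(\bold R^{(i)})^2\bold a_i=n^{-1}\Tr(\bold R^{(i)})^2+Q_i$, $Q_i$ centered given $\bold R^{(i)}$; the first summand is measurable with respect to rows other than $i$ and drops out under $\mathbf{E}_i-\mathbf{E}_{i-1}$. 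The standard variance bound for quadratic forms in i.i.d.\ entries of mean zero, variance one, and fourth moment at most $M_4$ (as used in Lemma \ref{lem:4.1}) gives $\E[|Q_i|^2|\bold R^{(i)}]\le CM_4n^{-2}\Tr|(\bold R^{(i)})^2|^2$. Combining the spectral identities $\Tr|\bold R^{(i)}|^2=v^{-1}\im\Tr\bold R^{(i)}$ and $\Tr|(\bold R^{(i)})^2|^2\le v^{-3}\im\Tr\bold R^{(i)}$ with the a priori bound $\E\,\im\Tr\bold R^{(i)}\le Cn(1+|\widetilde s_p(z)|)$ (derived as in the proof of Lemma \ref{lem:4.1}), we obtain $\E|Q_i|^2\le CM_4(1+|\widetilde s_p(z)|)/(nv^3)$. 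Condition (\ref{con5.2}) and the Schur representation (\ref{ineq3.1}), together with the bounds on $\varepsilon_j^{(1)},\ldots,\varepsilon_j^{(3)}$ already established, yield $|R_{ii}|\le C$. Hence $\E|D_i|^2\le CM_4(1+|\widetilde s_p(z)|)/(nv^3)$; summing in $i$ and dividing by $n^2$ proves (\ref{ineq10}).

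For (\ref{ineq11}) we apply Burkholder's inequality $\E|\sum_i D_i|^4\le C\,\E(\sum_i D_i^2)^2$ and combine the pointwise bound $D_i^2\le 4/v^2$ with the $L^2$ estimate above:
\begin{equation*}
\E\Bigl(\sum_i D_i^2\Bigr)^2=\sum_{i,j}\E[D_i^2D_j^2]\le\frac{4}{v^2}\cdot n\cdot\sum_j\E D_j^2\le\frac{CnM_4(1+|\widetilde s_p(z)|)}{v^5}.
\end{equation*}
Dividing by $n^4$ delivers (\ref{ineq11}). Bound (\ref{110}) follows immediately from Cauchy--Schwarz, $\E|\varepsilon_j^{(4)}|^3\le(\E|\varepsilon_j^{(4)}|^2)^{1/2}(\E|\varepsilon_j^{(4)}|^4)^{1/2}$. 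The principal technical point is the sharp quadratic-form estimate for $Q_i$ carrying the dependence on $\im\Tr\bold R^{(i)}$ rather than the crude factor $n/v$ that comes from the trivial resolvent norm bound $\|\bold R^{(i)}\|\le 1/v$; only the former produces the $(1+|\widetilde s_p(z)|)$ factor, which is indispensable in the edge regime $y\approx 1$ where $|\widetilde s_p(z)|$ can be large.
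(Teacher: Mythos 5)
Your martingale decomposition of $\Tr\bold R$ along rows, the interlacing bound for (\ref{ineq01c}), the Burkholder step for (\ref{ineq11}), and the Cauchy--Schwarz step for (\ref{110}) all match the paper's route. The gap is in (\ref{ineq10}), which is the crux. You write $\sigma_i:=\Tr\bold R-\Tr\bold R^{(i)}=R_{ii}\bigl(1+\bold a_i^{*}(\bold R^{(i)})^2\bold a_i\bigr)$, split the quadratic form as $n^{-1}\Tr(\bold R^{(i)})^2+Q_i$, and assert that the first summand ``drops out under $\mathbf E_i-\mathbf E_{i-1}$.'' It does not: it is multiplied by $R_{ii}$, which depends on row $i$, so $R_{ii}\bigl(1+n^{-1}\Tr(\bold R^{(i)})^2\bigr)$ is not annihilated by $\mathbf E_i-\mathbf E_{i-1}$. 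To make the cancellation rigorous one must first linearize $R_{ii}$ via the Schur representation (\ref{ineq3.1}), $R_{ii}=-\bigl(z+y\widetilde s_p(z)+\tfrac{y-1}{z}\bigr)^{-1}(1-\varepsilon_iR_{ii})$. Only then does the genuinely $\mathcal F^{(i)}$-measurable term $-\bigl(1+n^{-1}\Tr(\bold R^{(i)})^2\bigr)/\bigl(z+y\widetilde s_p(z)+\tfrac{y-1}{z}\bigr)$ appear and cancel, and what remains is $-Q_i/\bigl(z+y\widetilde s_p+\tfrac{y-1}{z}\bigr)+\varepsilon_i\sigma_i/\bigl(z+y\widetilde s_p+\tfrac{y-1}{z}\bigr)$.

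The second term $\varepsilon_i\sigma_i/(z+y\widetilde s_p+\tfrac{y-1}{z})$ is the source of the self-consistency loop that your argument skips entirely: $|\sigma_i|\le 1/v$ and $|z+y\widetilde s_p+\tfrac{y-1}{z}|\ge a_1$ give $\E|\gamma_i|^2\lesssim v^{-2}\E|\varepsilon_i|^2+\E|Q_i|^2$, and $\E|\varepsilon_i|^2$ contains $\E|\varepsilon_i^{(4)}|^2$ itself. Plugging the crude bound (\ref{ineq01c}) into $v^{-2}\E|\varepsilon_i^{(4)}|^2$ gives only $O((nv^4)^{-1})$, a factor $v^{-1}$ too weak. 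The paper closes this via the recursive inequality $\E|\varepsilon_j^{(4)}|^2\le C(1+|\widetilde s_p|)/(n^2v^3)+C(nv^2)^{-1}\E|\varepsilon_j^{(4)}|^2$, solvable for $v\ge cn^{-1/2}$. Finally, your invocation of a pointwise bound $|R_{ii}|\le C$ is unjustified: $R_{ii}=-1/(z+y\widetilde s_p+\tfrac{y-1}{z}-\varepsilon_i)$ can be large when $\varepsilon_i$ is near $z+y\widetilde s_p+\tfrac{y-1}{z}$; the paper only controls the average $\tfrac1n\sum_j\E|R_{jj}|^2\le C$ through the separate and non-trivial Lemma \ref{Lemma 5.4}, and uses Cauchy--Schwarz where a product with $R_{jj}$ appears. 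In short, (\ref{ineq01c}), (\ref{110}), (\ref{ineq11}) are fine, but (\ref{ineq10}) requires the resolvent linearization, the self-consistent bootstrap, and the $L^2$-averaged control of $R_{ii}$, none of which is present in your write-up.
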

\begin{proof}Note that
\begin{equation}
 \varepsilon_j^{(4)}=\frac1p(\sum_{j=1}^pR_{j+n,j+n}-\E\sum_{j=1}^pR_{j+n,j+n})=
\frac1p(\Tr\bold R(z)-\E\Tr\bold R(z))
\end{equation}

 Let $\E_k$ denote the conditional expectation given $X_{lm},\ 1\le l\le k; \ 1\le m\le p$.
\begin{equation}\label{ineq1}
 \E|\varepsilon_j^{(4)}|^2=\frac1{p^2}\sum_{k=1}^n\E|\gamma_k|^2,
\end{equation}
where
\begin{equation}
 \gamma_k=\E_{k}(\Tr\bold R)-\E_{k-1}(\Tr\bold R).
\end{equation}
Since $\E_{k}\Tr\bold R^{(k)}=\E_{k-1}\Tr\bold R^{(k)}$ we have
\begin{equation}
 \gamma_k=\E_{k}\sigma_k-\E_{k-1}\sigma_k,
\end{equation}
where
\begin{equation}
\sigma_k=(\Tr\bold R-\Tr\bold R^{(k)}) .
\end{equation}
According to Lemma \ref{Lemma 3.2}, we may represent $\sigma_k$ as follows
\begin{equation}
 \sigma_k=\sigma_k^{(1)}+\sigma_k^{(2)}+\sigma_k^{(3)}+\sigma_k^{(4)},
\end{equation}
where
\begin{align}
\sigma_k^{(1)}&=\frac{1+\frac1p\sum_{r=1}^n\sum_{s=1}^pX_{kr}\overline X_{ks}(\bold R^{(k)})^2_{rs}}{z+y\widetilde s_p(z)+\frac{y-1}z}\notag\\ 
\sigma_k^{(2)}&=\frac{\varepsilon_k\sigma_k}{z+y\widetilde s_p(z)+\frac{y-1}z}\notag\\ 
\sigma_k^{(3)}&=\frac{\frac1p\left(\sum_{r=1}^n\sum_{s=1}^pX_{kr}\overline X_{ks}(\bold R^{(k)})^2_{rs}-\Tr (\bold R^{(k)})^2\right)}{z+y\widetilde s_p(z)+\frac{y-1}z}\notag.
\end{align}
Since
\begin{equation}
 \E_{k}\sigma_k^{(1)}=\E_{k-1}\sigma_k^{(1)},
\end{equation}
we get
\begin{equation}\label{ineq4}
 \E|\gamma_k|^2\le 2(\E|\sigma_k^{(2)}|^2+\E|\sigma_k^{(3)}|^2)\le C(\frac 1{v^2}
\E|\varepsilon_k|^2+\E|\sigma_k^{(3)}|^2).
\end{equation}
By definition of $\varepsilon_k$, we have
\begin{equation}
 \E|\varepsilon_k|^2\le 4\E|\varepsilon_k^{(1)}|^2+4\E|\varepsilon_k^{(2)}|^2+4\E|\varepsilon_k^{(3)}|^2
+4\E|\varepsilon_k^{(4)}|^2.
\end{equation}

According to Lemmas \ref{lem:4.1} -- \ref{lem:4.2}, we have
\begin{equation}\label{ineq5}
\E|\varepsilon_k|^2\le \frac{C(1+|\widetilde s_p(z)|)}{nv}+4\E|\varepsilon_k^{(4)}|^2.
\end{equation}

Furthermore,
\begin{equation}\label{ineq6}
 \E|\sigma_k^{(3)}|^2\le \frac C{n^2v^3}\im\Tr \bold R^{(k)}\le \frac{C(1+|\widetilde s_p(z)|)}{nv^3}.
\end{equation}
Inequalities (\ref{ineq4}), (\ref{ineq5}) and (\ref{ineq6}) together imply that
\begin{equation}\label{ineq7}
 \E|\gamma_k|^2\le \frac{C(1+|\widetilde s_p(z)|)}{nv^3}+\frac C{v^2}\E|\varepsilon_k^{(4)}|^2
\end{equation}
From the inequalities (\ref{ineq1}) and (\ref{ineq7}) it follwos that
\begin{equation}
 \E|\varepsilon_k^{(4)}|^2\le\frac{C(1+|\widetilde s_p(z)|)}{n^2v^3}+\frac C{nv^2}\E|\varepsilon_k^{(4)}|^2.
\end{equation}
For $v\ge cn^{-\frac12}$ with some sufficiently small positive absolute constant $c$, we get
\begin{equation}
 \E|\varepsilon_k^{(4)}|^2\le\frac{C(1+|\widetilde s_p(z)|)}{n^2v^3}.
\end{equation}
Thus the inequality (\ref{ineq10}) is proved.
To prove inequality (\ref{ineq11}) we use the Burkholder inequality for martingales
(see Hall and Heyde \cite{hall}, p.24).
We get
\begin{equation}\label{ineq12}
\E|\varepsilon_k^{(4)}|^4\le \frac n{p^4}\sum_{l=1}^n\E|\gamma_l|^4.
\end{equation}
Using that $|\gamma_l|\le \frac2v$, we get
\begin{equation}\label{ineq13}
 \E|\gamma_l|^4\le \frac2{v^2}\E|\gamma_l|^2\le \frac{CM_4(1+|\widetilde s_p(z)|^4)}{nv^5}.
\end{equation}
Inequalities (\ref{ineq12}) and (\ref{ineq13}) together imply that
\begin{equation}
\E|\varepsilon_k^{(4)}|^4\le  \frac{CM_4(1+|\widetilde s_p(z)|^4)}{n^3v^5}. 
\end{equation}
Thus the Lemma is proved.
\end{proof}
\begin{lem}\label{Lemma 5.4}
 There exist some positive constants $c$ and $C$ such that, for any $1\ge v\ge cn^{-\frac12}$, the following inequality holds
\begin{equation}
 \frac1n\sum_{j=1}^n\E|R_{k,k}|^2\le C.
\end{equation} 
\end{lem}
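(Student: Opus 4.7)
My plan is to start from the identity (\ref{ineq3.1}), which we may rewrite as
\begin{equation*}
R_{j,j} = -\frac{1}{T}\bigl(1 - \varepsilon_j R_{j,j}\bigr), \qquad T := z + y\widetilde s_p(z) + \frac{y-1}{z},
\end{equation*}
and to exploit the lower bound $|T| \ge 1$ furnished by Lemma \ref{Lemma 3.4}. First I would verify the hypothesis of Lemma \ref{Lemma 3.4}, namely $\im\{y\delta_p(z) + z + (y-1)/z\} \ge 0$, throughout the range $1 \ge v \ge c n^{-1/2}$. Since $\im\{z + (y-1)/z\} = v + (1-y)v/|z|^2 \ge v$ for $y \le 1$, it suffices to make $|\delta_p(z)|$ an arbitrarily small multiple of $v$, which Lemma \ref{Lem3.0} allows. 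The resulting chicken-and-egg between ``small $\delta_p$'' and ``$T$ bounded away from zero'' is resolved by a standard continuity-in-$v$ bootstrap starting from $v=1$, where the estimates are obvious.

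Once $|T| \ge 1$ is secured, squaring the identity yields the pointwise bound $|R_{jj}|^2 \le 2 + 2|\varepsilon_j|^2 |R_{jj}|^2$. Writing $M := \frac{1}{n}\sum_{j=1}^n \E|R_{jj}|^2$, averaging over $j$, taking expectations, and applying Cauchy--Schwarz to the second term gives
\begin{equation*}
M \le 2 + 2\Bigl(\frac{1}{n}\sum_{j=1}^n \E|\varepsilon_j|^4\Bigr)^{1/2}\Bigl(\frac{1}{n}\sum_{j=1}^n \E|R_{jj}|^4\Bigr)^{1/2}.
\end{equation*}
The second factor I bound using $|R_{jj}| \le 1/v$, so that $\E|R_{jj}|^4 \le v^{-2}\E|R_{jj}|^2$ and the factor is at most $M^{1/2}/v$. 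For the first factor I collect the fourth-moment estimates on $\varepsilon_j^{(1)},\ldots,\varepsilon_j^{(4)}$ from Lemmas \ref{lem:4.1} and \ref{lem:4.2}; in the range $v \ge cn^{-1/2}$ the dominant contribution is $\varepsilon_j^{(1)}$, giving $\E|\varepsilon_j|^4 \le CM_4^2(1+|\widetilde s_p(z)|)^2/(n^2v^2)$, so the first factor is at most $CM_4(1+|\widetilde s_p(z)|)/(nv)$.

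Finally I close the loop by Jensen's inequality applied to $\widetilde s_p(z) = \E\frac{1}{n}\sum_j R_{jj}(z)$, which yields $|\widetilde s_p(z)| \le M^{1/2}$. Substituting produces the self-bounding inequality
\begin{equation*}
M \le 2 + \frac{C M_4}{nv^2}\bigl(1 + M^{1/2}\bigr)M^{1/2}.
\end{equation*}
Choosing the constant $c$ in the hypothesis $v \ge c n^{-1/2}$ sufficiently large (depending on $M_4$) makes $CM_4/(nv^2) \le 1/2$, at which point the inequality reduces to $M/2 \le 2 + M^{1/2}/2$ and forces $M \le C$ for an absolute constant. The main obstacle I expect is precisely this self-referential structure: the bound on $\E|\varepsilon_j|^4$ grows with $|\widetilde s_p(z)|$, yet $|\widetilde s_p(z)|$ is itself controlled only by the very quantity $M$ we seek to bound; the argument therefore closes only above the threshold $v \gtrsim n^{-1/2}$, where the prefactor $1/(nv^2)$ is small enough to absorb the quadratic-in-$M^{1/2}$ term back into the left-hand side.
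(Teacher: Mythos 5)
The overall strategy (start from the $R_{jj}$ identity, average, and extract a self-bounding inequality for $M := \frac1n\sum_j\E|R_{jj}|^2$) is the same as the paper's, but your Cauchy--Schwarz split $\E|\varepsilon_j|^2|R_{jj}|^2 \le (\E|\varepsilon_j|^4)^{1/2}(\E|R_{jj}|^4)^{1/2}$ applied uniformly to all four pieces of $\varepsilon_j$ has two concrete failures.

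First, the claim that $\varepsilon_j^{(1)}$ dominates $\E|\varepsilon_j|^4$ is wrong near the boundary $v\sim n^{-1/2}$; it is $\varepsilon_j^{(4)}$ that is worst, and for it the split is too lossy. Even granting the paper's sharpest estimate $\E|\varepsilon_j^{(4)}|^4 \le CM_4(1+|\widetilde s_p(z)|)/(n^3v^5)$ (inequality (\ref{ineq11}), which itself presupposes the two-sided bound (\ref{con5.2}) that you have not established), your product becomes $\bigl(\E|\varepsilon_j^{(4)}|^4\bigr)^{1/2}\cdot M^{1/2}/v \asymp M_4^{1/2}M^{1/2}(1+|\widetilde s_p|)^{1/2}/(n^{3/2}v^{7/2})$, which for $v=cn^{-1/2}$ behaves like $n^{1/4}$ and cannot be absorbed. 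With only the unconditional bounds ($\E|\varepsilon_j^{(4)}|^2\le 4/(nv^2)$, $|\varepsilon_j^{(4)}|\le 2/v$) it diverges like $n$. The paper avoids this entirely by using the structural fact that $\varepsilon_j^{(4)}$ is \emph{the same random variable for every} $j$: it pulls $\E|\varepsilon_1^{(4)}|^2$ out of the average and controls the remaining factor $\frac1n\sum_j|R_{jj}|^2 \lesssim v^{-1}\im\Tr\bold R/n$, which produces the much better terms in (\ref{ineq15})--(\ref{ineq16}).

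Second, your scheme requires a fourth-moment bound on $\varepsilon_j^{(2)}=\frac1p\sum_k(|X_{jk}|^2-1)R^{(j)}_{k+n,k+n}$, and any such bound (Rosenthal or otherwise) involves $\E(|X_{jk}|^2-1)^4$, i.e.\ $\E|X_{jk}|^8$. Theorem \ref{thm1.1} assumes only $M_4<\infty$, so this moment is unavailable, and the paper never states a fourth-moment estimate for $\varepsilon_j^{(2)}$. This is exactly why the paper treats $\varepsilon_j^{(2)}$ differently: it bounds $|R_{jj}|^2\le v^{-2}$ crudely and controls only $\E|\varepsilon_j^{(2)}|^2$, which needs just $M_4$ --- but this re-introduces the quantity $\frac1n\sum_k\E|R^{(j)}_{kk}|^2$ for the deleted matrix $\bold H^{(j)}$, and closing the estimate then requires the recursive iteration (\ref{4.41})--(\ref{5.48}) over $m=[C\log n]$ nested minors, with a geometric gain at each step. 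That recursion --- which your argument dispenses with --- is the real content of the paper's proof and is not optional under the stated moment hypotheses.

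Minor points: your bootstrap for $\im\{z+y\delta_p(z)+\frac{y-1}{z}\}\ge 0$ is essentially Lemma \ref{lem5.1} in the paper and is fine, and $|\widetilde s_p(z)|\le M^{1/2}$ via Jensen is a legitimate step (the paper uses a similar device). But they do not rescue the two gaps above.
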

\begin{proof}
 To prove this Lemma we repeat the proof of Lemma 5.4 in \cite{GT:03}.
Let
\begin{equation}
 U^2=\frac1n\sum_{j=1}^{n+p}\E|R_{k,k}|^2.
\end{equation}

By equality (\ref{ineq3.1}), we have
\begin{equation}\label{ineq:011}
U^2\le C(1+\frac1n\sum_{j=1}^n\E|\varepsilon_{j}|^2|R_{j,j}|^2).
\end{equation}
Applying Lemmas \ref{lem:4.1}--\ref{lem:4.2}, we obtain
\begin{equation}\label{ineq:012}
\frac1n\sum_{j=1}^n\E|\varepsilon_{j}^{(1)}|^2|R_{j,j}|^2\le \frac{CM_4}{nv^2}\left(\frac1n\sum_{j=1}^n\E|R_{j,j}|^2\right)^{\frac12}.
\end{equation}
Furthermore,
\begin{equation}\label{ineq:013}
 \frac1n\sum_{j=1}^n\E|\varepsilon_{j}^{(3)}|^2|R_{j,j}|^2\le\frac{C}{n^2v^4}.
\end{equation}
To bound $\frac1n\sum_{j=1}^n\E|\varepsilon_{j}^{(4)}|^2|R_{j,j}|^2$ we use that $\varepsilon_j^{(4)}$ does not  depend on $j$. We write
\begin{align}\label{ineq15}
\frac 1n\sum_{j=1}^n\E|\varepsilon_{j}^{(4)}|^2|R_{j,j}|^2&=
\E|\varepsilon_{1}^{(4)}|^2\left(\frac 1n\sum_{j=1}^n |R_{j,j}|^2\right)\notag\\
&\le
\frac Cv\E|\varepsilon_{1}^{(4)}|^2\frac1n\im\Tr\bold R(z)\notag\\
&\le \frac {C|\widetilde s_p(z)|}{v}
\E|\varepsilon_{1}^{(4)}|^2+\frac Cv\E|\varepsilon_{1}^{(4)}|^2|\frac 1n(\Tr \bold R(z)-
\E\Tr\bold R(z)|\notag\\&\le\frac {C(1+|\widetilde s_p(z)|)}{v}
\E|\varepsilon_{1}^{(4)}|^2+ \frac Cv\E|\varepsilon_{1}^{(4)}|^3
\end{align}
Inequalities (\ref{ineq10}), (\ref{ineq11}), and (\ref{ineq15}) together imply
\begin{equation}\label{ineq16}
 \frac1n\sum_{j=1}^n\E|\varepsilon_{j}^{(4)}|^2|R_{j,j}|^2\le\frac {CM_4(1+|\widetilde s_p(z)|)}{n^2v^4}
+\frac {CM_4(1+|\widetilde s_p(z)|)}{\sqrt{n^5v^{10}}}.
\end{equation}
Let 
\begin{equation}
 T:=\frac1n\sum_{j=1}^{n+p}\E|\varepsilon_{j}^{(2)}|^2|R_{j,j}|^2.
\end{equation}
From inequalities (\ref{ineq:011}), (\ref{ineq:012}), (\ref{ineq:013}), and (\ref{ineq16}) it follows that, for $v\ge cn^{-\frac12}$,
\begin{equation}
 U^2\le C+\delta U+T.
\end{equation}
Solving this equation with respect to $U$, we get
\begin{equation}\label{ineq:5.58}
 U^2\le C+T.
\end{equation}
To bound $T$ we start from the obvious inequality
\begin{equation}\label{4.41}
T\le \frac1{v^2}\frac1n\sum_{j=1}^{n+p}\E|\varepsilon_{j}^{(2)}|^2\le
 \frac C{nv^2}\frac1n\sum_{j=1}^{n+p}\left(\frac1n{\sum}^{(j)}\E|R^{(j)}_{k,k}|^2\right),
\end{equation}
where ${\sum}^{(j)}$ denotes the sum over all $k=1,\ldots, n+p$ except $k=j$.
Introduce now some integer number $m=m(n)$ depending on $n$ such that
\newline
$mv^{-1}\le a_1/4$. Without loss of generality we may assume that
$m\le n/2$. Since \newline
$|\widetilde s_{p-l}(z)-\widetilde s_{p-l-1}(z)|\le \frac{1}{n-l}$
we get
$$
a_1/2\le \min_{1\le l\le m}|\widetilde s_{p-l}(z)+z+\frac{y-1}{z}|\le
\max_{1\le l\le m} |y\widetilde s_{p-l}(z)+z+\frac{y-1}{z}|\le \frac32 a_2.
$$
Let $\bold j^{(r)}=(j_1,\ldots,j_r)$ with $1\le j_1\ne j_2\ldots\ne j_r\le n$,
$r=1,\ldots ,m$.
Denote by $\bold H^{(\bold j^{(r)})}$ the matrix which is obtained from $\bold H$ by deleting
the $j_1$th, $\ldots$, $j_r$th rows and columns, and let 
$$
\bold R^{(\bold j^{(r)})}=
\left(\frac1{\sqrt{n-r}}\bold H^{(\bold j^{(r)})}-z\bold I_{n+p-r}\right)^{-1}.
$$
Arguing similar as in  inequality (\ref{4.41}) we get that uniformly for
 $r=1,\ldots,m-1$, and for $v\ge C_1(a_1,a_2)n^{-\frac12}M^{\frac12}$
\begin{align}\label{5.45}
\frac1{n}\sum_{k=1,\,
k\notin \bold j^{(r)}}^n\E|R^{(\bold j^{(r)})}_{k,k}|^2
&\le \frac{C_0(a_1,a_2)M}{nv^2}
\Big(\frac1n\sum_{k=1,\,k\notin \bold j^{(r)}}^n
\Big(\frac1n\sum_{j=1,j\notin \bold j^{(r+1)}}^n
\E|R^{(\bold j^{(r+1))}}_{j,j}|^2\Big)\Big)\notag
\\&\qquad\qquad\qquad\qquad\qquad\qquad\qquad\qquad+C_0(a_1,a_2).
\end{align}
Note that the constants $C_0(a_1,a_2)$ and $C_1(a_1,a_2)$
do not depend on 
$l=1,\ldots,m$. 

Applying inequality (\ref{5.45}) recursively we get for 
$1\ge v\ge C_1(a_1,a_2)n^{-1/2}M^{\frac12}$,
\begin{align}\label{5.46}
\frac1n\sum_{k=1}^n\E|R_{k,k}|^2&\le C_0(a_1,a_2)\sum_{r=0}^{m-1}
\Big(\frac{C_0(a_1,a_2)M}{nv^2}\Big)^r\notag\\+
\Big(\frac{C_0(a_1,a_2)M}{nv^2}&\Big)^m\Big(\frac1n\sum_{k=1,\, k\notin \bold j^{(m-1)}}^n
\Big(\frac1n\sum_{j=1,\,j\notin \bold j^{(m)}}^n
\E|R^{\bold j^{(m)}}_{j,j}|^2\Big)\Big)
 \end{align}
Without loss of generality we may assume that 
$$
\frac{C_0(a_1,a_2)M}{nv^2}\le \frac12.
$$
Similar to inequality (\ref{ineq4.5})
we get that
\begin{equation}\label{5.47}
\frac1n\sum_{j=1,\, j\notin \bold j^{(m)}}^n
\E|R_{\bold j^{(m)}}(j,j)|^2\le \E\Tr|R_{\bold j^{(m)}}|^2
\le\frac {C_0(a_1,a_2)}v.
\end{equation}
The inequalities (\ref{5.46}) and (\ref{5.47}) together imply that
\begin{equation}\label{5.48}
\frac1n\sum_{k=1}^n\E|R(k,k)|^2\le 2C_0(a_1,a_2)+
\frac1{2^m}\frac Cv.
\end{equation}

Choosing $m=[C\log n]$ such that $2^{-m}\le Cv$ concludes 
the proof. 
\end{proof}
\begin{lem}\label{Lemma 5.5}Assume that condition $(\ref{con5.2})$ holds. Then there exist positive 
constants $C_3(a_1,a_2)$ and $C_4(a_1,a_2)$ such that for $v\ge C_3(a_1,a_2)
n^{-1/2}M^{1/2}$ the following inequality holds
$$
|\delta_p(z)|\le \frac{C_4(a_1,a_2)M}{nv}.
$$
\end{lem}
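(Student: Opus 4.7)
The plan is to bound the remainder
\begin{equation*}
\delta_p(z) = \frac{1}{n(z+y\widetilde s_p(z)+(y-1)/z)} \sum_{j=1}^n \E[\varepsilon_j R_{jj}]
\end{equation*}
by splitting $\varepsilon_j$ as in (\ref{rep5.1}) and repeatedly using the expansion (\ref{ineq3.1}). Condition (\ref{con5.2}) makes the prefactor $R_{jj}^{(0)}:=-(z+y\widetilde s_p(z)+(y-1)/z)^{-1}$ deterministic with $|R_{jj}^{(0)}|\le 1/a_1$, and together with (\ref{qq}) it forces $|\widetilde s_p(z)|\le c(a_1,a_2)$; hence all of Lemmas \ref{lem:4.1}--\ref{lem:4.2} and Lemma \ref{Lemma 5.4} apply with constants depending only on $(a_1,a_2)$.

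The first step is the identity $R_{jj}=R_{jj}^{(0)}-R_{jj}^{(0)}\varepsilon_j R_{jj}$, which gives
\begin{equation*}
\E[\varepsilon_j^{(k)} R_{jj}] = R_{jj}^{(0)}\,\E\varepsilon_j^{(k)} - R_{jj}^{(0)}\,\E[\varepsilon_j^{(k)}\varepsilon_j R_{jj}].
\end{equation*}
The linear term vanishes for $k\in\{1,2,4\}$ since $\E[\varepsilon_j^{(k)}\mid \bold R^{(j)}]=0$ for $k=1,2$ (off-diagonal quadratic form in $X_{j,\cdot}$ respectively conditionally centered diagonal, using the independence of the $j$-th row of $\bold X$ from $\bold R^{(j)}$) and $\E\varepsilon_j^{(4)}=0$ by construction; for $k=3$ the deterministic bound $|\varepsilon_j^{(3)}|\le C/(nv)$ from Lemma \ref{lem4.1} already gives at most $O(1/(a_1 nv))$, which is absorbed by $CM/(nv)$ since $M\ge 1$.

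For the quadratic remainders $\sum_j R_{jj}^{(0)}\E[\varepsilon_j^{(k)}\varepsilon_j R_{jj}]$ I treat each $k$ separately. For $k=1$, H\"older with exponents $(4,4,2)$ gives
\begin{equation*}
\sum_j \bigl|\E[\varepsilon_j^{(1)}\varepsilon_j R_{jj}]\bigr| \le \Bigl(\sum_j\E|\varepsilon_j^{(1)}|^4\Bigr)^{1/4}\Bigl(\sum_j\E|\varepsilon_j|^4\Bigr)^{1/4}\Bigl(\sum_j\E|R_{jj}|^2\Bigr)^{1/2},
\end{equation*}
and the three factors are controlled respectively by (\ref{ineq02}), by a companion bound on $\E|\varepsilon_j|^4$ assembled from (\ref{ineq02}), (\ref{ineq11}) and the other a priori estimates, and by Lemma \ref{Lemma 5.4}; in the regime $v\ge C_3 n^{-1/2}M^{1/2}$ the arithmetic produces $CM/(a_1^2 nv)$. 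The $k=3$ contribution is strictly smaller by the deterministic estimate on $\varepsilon_j^{(3)}$. For $k=4$ I exploit the crucial fact that $\varepsilon^{(4)}:=\varepsilon_j^{(4)}$ is independent of $j$: applying (\ref{ineq3.1}) once more gives $\sum_j \varepsilon_j R_{jj} = n + (z+y\widetilde s_p+(y-1)/z)\sum_j R_{jj}$, and then
\begin{equation*}
\sum_j \E[\varepsilon^{(4)}\varepsilon_j R_{jj}] = n(z+y\widetilde s_p+(y-1)/z)\,\E\bigl[\varepsilon^{(4)}\bigl(n^{-1}\textstyle\sum_j R_{jj}-\widetilde s_p\bigr)\bigr]
\end{equation*}
after using $\E\varepsilon^{(4)}=0$ to drop the deterministic shifts. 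Since $\varepsilon^{(4)}$ and $n^{-1}\sum_j R_{jj}-\widetilde s_p$ are affinely related through (\ref{rep5.1}), the right-hand side reduces to a multiple of $\E|\varepsilon^{(4)}|^2=O(M/(n^2v^3))$ by (\ref{ineq10}), which is absorbed by $CM/(nv)$ throughout $v\ge C_3 n^{-1/2}M^{1/2}$.

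The main technical obstacle is the term $k=2$, together with the 4th-moment bound on $\varepsilon_j$ used above: the naive Rosenthal or Marcinkiewicz--Zygmund estimate for $\E|\varepsilon_j^{(2)}|^4$ requires $\E(|X_{jk}|^2-1)^4<\infty$ and hence $M_8<\infty$, which is not hypothesized under the assumptions of Theorem \ref{thm1.1}. To bypass this I would decompose $\sum_j \E[\varepsilon_j^{(2)}\varepsilon_j R_{jj}] = \sum_{\ell=1}^4 \sum_j \E[\varepsilon_j^{(2)}\varepsilon_j^{(\ell)}R_{jj}]$ and handle each cross-term by conditioning on $\bold R^{(j)}$. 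A direct computation shows $\E[\varepsilon_j^{(2)}\varepsilon_j^{(1)}\mid\bold R^{(j)}]=0$, so a further application of (\ref{ineq3.1}) to the trailing $R_{jj}$ reduces the $(2,1)$-cross term to a triple product $\E|\varepsilon_j^{(2)}\varepsilon_j^{(1)}\varepsilon_j R_{jj}|$ that is estimated by Cauchy--Schwarz from $(\E|\varepsilon_j^{(2)}|^2)^{1/2}$ (using (\ref{ineq01a})), $(\E|\varepsilon_j^{(1)}|^2)^{1/2}$ (using (\ref{ineq01})) and $|R_{jj}|\le 1/v$. The $(2,2)$ self-term splits into a linear piece $R_{jj}^{(0)}\E|\varepsilon_j^{(2)}|^2$, of order $O(1/(a_1 nv))$ after summation, plus a cubic residual $\E[(\varepsilon_j^{(2)})^2\varepsilon_j R_{jj}]$ that is controlled either by a truncation of $X_{jk}$ at a level chosen so that the truncation error is $o(M/(nv))$, or by the martingale trick $|\gamma_l|^4\le(2/v)^2|\gamma_l|^2$ already used to derive (\ref{ineq11}). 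The $(2,3)$ and $(2,4)$ cross-terms are straightforward via Cauchy--Schwarz using (\ref{ineq01a}), (\ref{ineq01b}), (\ref{ineq10}). The threshold $v\ge C_3 n^{-1/2}M^{1/2}$ is precisely what is needed so that every $1/v^a$ factor arising in these estimates can be absorbed into a single $CM/(nv)$; summing the four contributions then yields the claim.
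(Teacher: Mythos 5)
Your overall skeleton — expand $R_{jj}=R^{(0)}_{jj}-R^{(0)}_{jj}\varepsilon_jR_{jj}$ with $R^{(0)}_{jj}:=-(z+y\widetilde s_p(z)+(y-1)/z)^{-1}$, split $\sum_j\E[\varepsilon_jR_{jj}]$ into a linear part (controlled by $\E\varepsilon_j^{(\nu)}=0$ for $\nu\in\{1,2,4\}$ and the deterministic bound for $\varepsilon_j^{(3)}$) and a quadratic remainder, and then call on the Section~4 estimates and Lemma~\ref{Lemma 5.4} — is the same route the paper takes starting from (\ref{5.49}). But the way you organize the quadratic part creates a genuine gap. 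You write the remainder as $\sum_k\E[\varepsilon_j^{(k)}\varepsilon_jR_{jj}]$ and apply H\"older with exponents $(4,4,2)$ to the $k=1$ block; that requires $\E|\varepsilon_j|^4$, hence $\E|\varepsilon_j^{(2)}|^4$, hence eighth moments of $X_{jk}$, which are not assumed in Theorem~\ref{thm1.1}. You notice the difficulty, but the proposed repairs do not close it: the ``$(2,1)$ triple product'' $\E|\varepsilon_j^{(2)}\varepsilon_j^{(1)}\varepsilon_jR_{jj}|$ is a \emph{cubic} in the $\varepsilon$'s with an undistributed $\varepsilon_j$ factor, so any further Cauchy--Schwarz or H\"older split again forces $\E|\varepsilon_j^{(2)}|^q$ with $q\ge 3$ (needing at least $M_6$) or re-introduces $\E|\varepsilon_j|^4$; and the truncation route is only asserted, not carried out.

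The paper avoids the moment obstruction entirely by \emph{not} expanding the second $\varepsilon_j$. From $|\delta_p(z)|\le C|z+y\widetilde s_p+(y-1)/z|^{-2}\bigl(\frac1n\sum_j|\E\varepsilon_j|+\frac1n\sum_j\E|\varepsilon_j|^2|R_{jj}|\bigr)$ one passes directly to $\E|\varepsilon_j|^2|R_{jj}|\le C\sum_\nu\E|\varepsilon_j^{(\nu)}|^2|R_{jj}|$ via $|\sum_\nu a_\nu|^2\le C\sum_\nu|a_\nu|^2$, so that no cross-term ever appears. For $\nu=1$ a plain Cauchy--Schwarz $\le(\E|\varepsilon_j^{(1)}|^4)^{1/2}(\E|R_{jj}|^2)^{1/2}$ combined with (\ref{ineq02}) and Lemma~\ref{Lemma 5.4} gives the $O(M/(nv))$ bound (the fourth moment of $\varepsilon_j^{(1)}$ needs only $M_4$). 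For $\nu=2$ one uses $|R_{jj}|\le 1/v$ and then $\E|\varepsilon_j^{(2)}|^2\le \frac{CM_4}{n}\cdot\frac1n\sum_{k\ne j}\E|R^{(j)}_{kk}|^2$, with the last factor controlled by Lemma~\ref{Lemma 5.4} applied to $\bold H^{(j)}$. This uses only $M_4$, which is the entire point of that step; you should replace the H\"older$(4,4,2)$ and cross-term machinery by this.

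One thing you do \emph{better} than the written proof: your $k=4$ step. Using $\sum_j\varepsilon_jR_{jj}=n+(z+y\widetilde s_p+(y-1)/z)\sum_jR_{jj}$ together with $\E\varepsilon^{(4)}=0$ and the affine relation between $\varepsilon^{(4)}$ and $n^{-1}\sum_jR_{jj}-\widetilde s_p(z)$ reduces the $k=4$ contribution to a multiple of $\E|\varepsilon^{(4)}|^2=O(M/(n^2v^3))$, which for $v\ge cn^{-1/2}M^{1/2}$ is $O(1/(nv))$. The paper's display (\ref{5.51}) lists $\nu=4$ in the sum but never bounds it explicitly, and the crude bound $\frac1v\E|\varepsilon^{(4)}|^2=O(M/(n^2v^4))$ is not obviously $\le CM/(nv)$ throughout the regime $v\ge Cn^{-1/2}M^{1/2}$. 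So your covariance argument for that term is sound and supplies a detail the paper glosses over. In short: keep your $k=4$ computation, replace your $k=1$ and $k=2$ bounds by the $|\varepsilon_j|^2\le C\sum_\nu|\varepsilon_j^{(\nu)}|^2$ route, and then the argument goes through under $M_4<\infty$ as required.
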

\begin{proof}The equalities (4.5) and (4.6) imply that
\begin{equation}\label{5.49}
|\delta_p(z)|\le 
\frac{C}{|z+y\widetilde s_p(z)+\frac{y-1}z|^2}\Bigl(\frac1p\sum_{k=1}^{n+p}|\E\varepsilon_k|+
\frac1p\sum_{k=1}^{n+p}\E|\varepsilon_k|^2|R(j,j)|\Bigr).
\end{equation}
According to Lemma \ref{lem4.1} and inequality (\ref{con5.2}) we get
\begin{equation}\label{5.50}
\frac{C}{|z+ys_n(z)+\frac{y-1}z|^2}\Bigl(\frac1n\sum_{k=1}^n|\E\varepsilon_k|\Bigr)\le
\frac{C}{nva_1^{2}}\le
\frac{C(a_1,a_2)}{nv}.
\end{equation}
Using the representation (\ref{rep5.1}), we obtain
\begin{equation}\label{5.51}
\frac{C}{|z+ys_n(z)+\frac{y-1}z|^2}
\Bigl(\frac1n\sum_{k=1}^n\E|\varepsilon_k|^2|R(j,j)|\Bigr)\le
C(a_1,a_2)\sum_{\nu=1}^4
\Bigl(\frac1n\sum_{k=1}^n\E|\varepsilon_k^{(\nu)}|^2|R(j,j)|\Bigr).
\end{equation}

Similar to inequality (\ref{5.46}) and by Lemma \ref{Lemma 3.4} we arrive at
\begin{align}\label{5.52}
\frac1n\sum_{k=1}^n\E|\varepsilon_k^{((1)}|^2|R(k,k|&\le
 \Big(\frac1n\sum_{k=1}\E|\varepsilon_k^{(1)}|^4\Big)^{1/2}
\Big(\frac1n\sum_{k=1}^n\E|R(k,k)|^2\Big)^{1/2}\\&\le
\frac {C(a_1,a_2)M^{\frac12}}{nv}.
\end{align}
By Lemma \ref{lem4.1}, $|\varepsilon_k^{(3)}|\le (nv)^{-1}$   we have
\begin{equation}\label{5.53}
\frac1n\sum_{k=1}^n\E|\varepsilon_k^{(3)}|^2|R_{k,k}|\le 
\frac{1}{n^2v^3}
\le \frac{C(a_1,a_2)}{nv}.
\end{equation}

Finally, note that
$$
\frac1n\sum_{k=1}^n\E|\varepsilon_k^{(2)}|^2|R(k,k|\le
\frac1{nv}\sum_{k=1}^n\E|\varepsilon_k^{(2)}|^2\le
\frac{C(a_1,a_2)M}{nv}\Big(\frac1n\sum_{j=1,j\ne k}\E|R_(j,j)^{(k)}|^2\Big).
$$
Applying Lemma \ref{Lemma 5.4} to the matrix $\bold H^{(k)}$ we get
\begin{equation}\label{5.55}
\frac1n\sum_{k=1}^n\E|\varepsilon_k^{(2)}|^2|R(k,k|\le\frac{C(a_1,a_2)M}{nv}.
\end{equation}

The inequalities (\ref{5.49})--(\ref{5.55}) together imply that for 
$1\ge v\ge C_1(a_1,a_2)n^{-1/2}M^{\frac12}$
$$
|\delta_n(z)|\le \frac{C(a_1,a_2)M}{nv},
$$
which proves Lemma \ref{Lemma 5.5}.
\end{proof}
\begin{lem}\label{lem5.1} Assuming the  conditions of Theorem \ref{thm1.1}, there exists an absolute positive constant $C$ such that for any $1\ge v\ge CM^{1/2}n^{-1/2}$ and $u\in [a,b]$, the following inequality holds
 \begin{equation}\label{ineq5.0}
  \im\Bigl\{z+y\widetilde s_p(z)+\frac{y-1}z\Bigr\}>0,\quad z=u+iv.
 \end{equation}

\end{lem}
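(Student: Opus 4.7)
The plan is a direct computation of the imaginary part of $W := z + y\widetilde{s}_p(z) + (y-1)/z$, decomposing it into three summands and showing that each is non-negative and that at least one is strictly positive.

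Since $z = u + iv$, we have $\im z = v > 0$ immediately, and writing $(y-1)/z = (y-1)\bar z/|z|^{2}$ gives $\im\bigl((y-1)/z\bigr) = (1-y)v/|z|^{2} \ge 0$ by the standing hypothesis $y \le 1$. For the middle term, the key observation is that $\widetilde s_p$ is the Stieltjes transform of a bona fide probability measure on $\mathbb R$: from the identity $\widetilde s_p(z) = n^{-1}\sum_{j=1}^n \E R_{jj}(z)$ and the spectral decomposition of the real symmetric matrix $\mathbf{H}$, each diagonal resolvent entry satisfies
\[
\im R_{jj}(z) \;=\; v \sum_k \frac{|u_k(j)|^{2}}{|\mu_k - z|^{2}} \;>\; 0 \qquad (v>0),
\]
where $\mu_k$ and $u_k$ are the (real) eigenvalues and corresponding eigenvectors of $\mathbf H$. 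Averaging over $j$ and taking expectation preserves positivity, and multiplication by $y > 0$ gives $y\,\im \widetilde s_p(z) > 0$. Adding the three contributions yields
\[
\im W \;=\; v \;+\; y\,\im \widetilde s_p(z) \;+\; \frac{(1-y)v}{|z|^{2}} \;>\; v \;>\; 0,
\]
which is the claim.

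The main point worth flagging is that this proof is essentially free: neither the quantitative hypothesis $1 \ge v \ge CM^{1/2}n^{-1/2}$ nor the restriction $u \in [a,b]$ plays any role in the argument above. Both are inherited only to match the standing range of validity of the surrounding lemmas that control $\delta_p(z)$ (notably Lemmas~\ref{Lem3.0} and \ref{Lemma 5.5}). The substantive role of Lemma~\ref{lem5.1} instead lies downstream: combined with Lemma~\ref{Lemma 3.4} it supplies the lower bound on $|W|$ needed to make the $\delta_p(z)$-estimates effective and, ultimately, to drive the proof of Theorem~\ref{thm1.1}.
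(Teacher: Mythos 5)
Your computation is correct as far as it goes: $\widetilde{s}_p$ is the Stieltjes transform of the genuine probability distribution $\widetilde{F}_p$ (the symmetrization of $\E F_p$), so $\im\widetilde{s}_p(z)>0$ whenever $v>0$; combined with $\im z=v$ and $\im\{(y-1)/z\}=(1-y)v/|z|^2\ge 0$ for $y\le 1$, the inequality as literally written is indeed immediate for every $v>0$. You sensed something was off --- ``this proof is essentially free'' --- but you misdiagnosed the cause. The hypotheses $v\ge CM^{1/2}n^{-1/2}$ and $u\in[a,b]$ are not merely inherited bookkeeping: they are essential, because the lemma statement contains a typo. It should read $\delta_p(z)$ where it reads $\widetilde{s}_p(z)$.

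Several independent pieces of evidence confirm this. First, the paper's own proof opens by supposing $\im\{r_n(z)\}=0$ for $r_n(z):=z+y\delta_p(z)+\frac{y-1}{z}$ and closes by deducing $\im\{r_n(z)\}>0$, stating ``this proves Lemma~\ref{lem5.1}.'' Second, the proof of Theorem~\ref{thm1.1} invokes the lemma precisely as ``By Lemma~\ref{lem5.1} $\ldots$ $\im\{z+y\delta_p(z)+\frac{y-1}{z}\}>0$,'' which is then fed into Lemma~\ref{Lemma 3.4} whose hypothesis is stated in terms of $\delta_p$. Third, the triviality you established, $\im\{z+y\widetilde{s}_p(z)+\frac{y-1}{z}\}\ge v>0$, is in fact used \emph{inside} the paper's proof (the step ``Since $\im\{t(z)\}\ge\im z=v>0$'') as one ingredient --- not as the conclusion.

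The intended statement is genuinely nontrivial and your argument does not transfer to it. Unlike $\widetilde{s}_p$, the remainder $\delta_p(z)$ is not a Stieltjes transform of a measure; its imaginary part has no a priori sign. The paper's proof proceeds by contradiction: if $\im r_n(z)=0$, then from $t(z)=-y/t(z)+r_n(z)$ and $\im t(z)>0$ one deduces $|t(z)|=\sqrt y$, which puts condition~(\ref{con5.2}) in force with $a_1=a_2=\sqrt y$; Lemma~\ref{Lemma 5.5} then gives $|\delta_p(z)|\le CM/(nv)<v/4$ for $v\ge 2\sqrt{CM}\,n^{-1/2}$, contradicting the bound $|\delta_p(z)|\ge v$ forced by $\im r_n(z)=0$ (since $y\im\delta_p(z)=-v-(1-y)v/|z|^2\le -v$). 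A continuity argument from $v=1$ (where Lemma~\ref{Lem3.0} applies) then upgrades $\im r_n(z)\ne 0$ to $\im r_n(z)>0$ throughout the region. This is where the quantitative lower bound on $v$ enters irreducibly; your approach bypasses all of it because you proved a different (and, on its own, useless) statement.
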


\begin{Proof of} Lemma \ref{lem5.1}.
Assume that for $r_n(z):=z+y\delta_p(z)+\frac{y-1}z$ the following equality holds
\begin{equation}\label{5.56}
\im\bigl\{r_n(z)\bigr\}=0.
\end{equation}
Denote be $t(z):=y\widetilde s_p(z)+\frac{y-1}z+z$. 
Since 
$$
t(z)=-\frac y{t(z)}+r_n(z)
$$ 
this immediately implies that
$$
\im t(z)=-\im\Bigl\{\frac y{t(z)}\Bigr\}.
$$
 Since $\im\{t(z)\}\ge\im z=v>0$ 
this implies that
$$
|t(z)|=\sqrt y.
$$ 
Hence  condition (\ref{con5.2}) holds with
$a_1=a_2=\sqrt y$ and we have
$$
|\delta_p(z)|\le \frac{CM}{nv}.
$$
Then for any $v\ge 2n^{-\frac12}\sqrt{CM}$,
$$
|\delta_n(z)|\le \frac14 v<v,
$$
holds.
But condition (\ref{5.56}) implies that
$$
|\delta_p(z)|\ge v,
$$
which is a contradiction. 
Hence we conclude that $\im\{z+y\delta_p(z)+\frac{y-1}z\}\ne0$ in the region
$v\ge 2n^{-\frac12}\sqrt{CM}$. From Lemma \ref{Lem3.0}
it follows for example
that, for $v=1$,
$\im\{r_n(z)\}>0$.  Since the function 
$\im\{r_n(z)\}$ is continuous in the region 
$v\ge C_1n^{-\frac12}\sqrt{M}$
we get that
$\im\{r_n(z)\}>0$ for $v\ge C_1n^{-\frac12}\sqrt{M}$.
This proves Lemma \ref{lem5.1}.
\end{Proof of}

\begin{Proof of} Theorem \ref{thm1.1}. Recall that $1\ge y\ge \theta>0$. Let 
$v_0=\max\{\gamma_0\Delta_p,2n^{-\frac12}C_1M^{\frac12}\}$
with a $\gamma_0$ such that $1>\gamma_0>0$ to be chosen later. 
By Lemma \ref{lem5.1} for any $1\ge v\ge v_0$ we have
$$
\im\{z+y\delta_p(z)+\frac{y-1}z\}>0. 
$$
Note that the constant $C_1$ does  not depend on $\gamma_0$.
In addition we have
\begin{align}
|\widetilde s_p(z)-\widetilde s_y(z)|&=
\Bigl|\int_{-\infty}^{\infty}\frac1{x-z}d\Bigl(\E\,\widetilde F_p(x)-\widetilde F_y(x)\Bigr)\Bigr|
\\&=\Bigl|\int_{-\infty}^{\infty}\frac{\E\,\widetilde F_p(x)-\widetilde F_y(x)}{(x-z)^2}dx\Bigr|\le
\frac{\Delta_p}v\le \frac 1\gamma_0.
\end{align}
This implies that for $z=u+iv$ such that $|u|\in[a,b]$,
$1\ge v\ge v_0$, we have
\begin{equation}\label{5.57}
|y\widetilde s_p(z)+z+\frac{y-1}z|\le \frac 1\gamma_0+5.
\end{equation}
From equality (\ref{qq}) it follows that
\begin{equation}\label{q0}
s_p(z)=-\frac1{2y}\Bigl(z+\frac{y-1}z-y\delta_p(z)-\sqrt{(z+\frac{y-1}z+y\delta_p(z))^2-4y}\Bigr).
\end{equation}
Introduce the function
\begin{equation}\label{q1}
 q(z):=-\frac1{2y}(z-\sqrt{z^2-4y}).
\end{equation}
Equalities (\ref{q0}) and (\ref{q1})  together imply that
for $v\ge v_0$
\begin{equation}
z+y\widetilde s_p(z)+\frac{y-1}z=q(\omega+y\delta_p(z))\label{5.58}
\end{equation}
where $\omega:=z+\frac{y-1}z$.
Let $s(z)$ denote the Stieltjes transform of the  semicircular law.
Then
$q(z)=\frac1{sqrt y}s(z/\sqrt y)$. This implies in particular that $|q(z)|\le 1/\sqrt y$.
Since $\im\{y\delta_p(z)+\omega\}>0$  the equality (\ref{5.58})  immediately implies that
\begin{equation}
|z+y\widetilde s_p(z)+\frac{y-1}z|\ge 1/\sqrt y,\qquad \text{for}\qquad v\ge v_0\label{5.59}
\end{equation}
From the inequalities (\ref{5.58}) and (\ref{5.59}) it follows 
that condition (\ref{con5.2}) holds with
$a_1=1$,
 and  $a_2=\frac1{\gamma_0}+5$.
The relation (\ref{5.58}) implies that
\begin{equation}
|\widetilde s_p(z)-\widetilde s_y(z)|\le \frac1{\sqrt y}\left|{q(\omega)}-{q(\omega+y\delta_p(z)}\right|.\label{5.60}
\end{equation}
After a simple calculation we get
\begin{equation}
|\widetilde s_p(z)-\widetilde s_y(z)|\le \frac{y|\delta_n(z)|}{|\sqrt{(\omega+y\delta_p(z))^2-4y}+\sqrt{\omega^2-4y}}.\label{5.61}
\end{equation}
By Lemma \ref{Lemma 5.5} we obtain for $1\ge v\ge v_0$, 
\begin{equation}
|\delta_n(z)|\le \frac14v,\label{5.62}
\end{equation}
and for $z=u+iv$ such that $u\in I$ we get
\begin{equation}
\min\{\sqrt{|\omega^2-4y|},\sqrt{|(\omega+y\delta_n(z))^2-4y|}\}\ge C\sqrt{v}.\label{5.63}
\end{equation}
Inequalities (5.61)--(5.63) imply that for $z=u+iv$ such that 
$u\in I$ and $1\ge v\ge v_0$ 
\begin{equation}
|\widetilde s_p(z)-\widetilde s_y(z)|\le \frac {C|\delta_p(z)|}{\sqrt v}.\label{5.64}
\end{equation}

By Lemma \ref{Lemma 5.5} we have
\begin{equation}
|\delta_p(z)|\le \frac{C(\gamma_0)M}{nv}.\label{5.65}
\end{equation}
From (5.64) and (5.65) it follows that
$$
|\widetilde s_p(z)-\widetilde s(z)|\le \frac{C(\gamma_0)M}{nv^{\frac32}}.
$$
Choosing in Corollary 2.3 $V=1$ and using the inequality
(4.29) we get after integrating in $u$ and $v$
$$
\Delta_n \le C_1Mn^{-1}+C_2v_0+C_3(\gamma_0)Mn^{-1}v_0^{-1}.
$$
Since $v_0\ge 2n^{-\frac12}\sqrt{C_1(\gamma_0)M}$ we get
$$
\Delta_n\le C(\gamma_0)M^{\frac12}n^{-\frac12}+C_3v_0
$$
Recall that $C_2$ does not depend on $\gamma_0$.
If $v_0=2n^{-\frac12}C_1(\gamma_0)M^{\frac12}$ then
$$
\Delta_n\le C(\gamma_0)M^{\frac12}n^{-\frac12}.
$$
We choose $\gamma_0=\frac1{2C_3}$.
If $v_0=\gamma_0\Delta_n$ then 
$$
\Delta_n\le C(\gamma_0)M^{\frac12}(1-C_3\gamma_0)^{-1}n^{-\frac12}\le
2C(\gamma_0)M^{\frac12}n^{-\frac12}.
$$
This completes the proof of Theorem \ref{thm1.1}. 
\end{Proof of}

{\bf Acknowledgment.}
The authors would like to thank Dmitry Timushev for careful reading of the manuscript.

%

%%%%%%%%%%%%%%%%%%%%%%%%%%%%%%%%%%%%%%%%%%%%%%%%%%%%%%%%%%%%%%%%%%%%%%%%%%%%%%%%%%%%%%%%%%%%%%%%%%%%%%


\begin{thebibliography}{99}
\itemsep=\smallskipamount


\bibitem{Bai:93}Bai, Z. D.
 {\em Convergence rate of expected spectral distributions of large random matrices. II. Sample covariance matrices. }
 Ann. Probab.  21  (1993),  no. 2, 649--672. 

\bibitem{Bai:03}Bai, Z. D.; Miao, Baiqi; Yao, Jian-Feng.
 Convergence rates of spectral distributions of large sample covariance matrices. (English summary) 
 SIAM J. Matrix Anal. Appl.  25  (2003),  no. 1, 105--127 (electronic). 
60F15 (60E10 62H99) 

\bibitem{Bai:05} Bai, Z. D. and Silverstein, J.
      {\em Spectral analysis of large dimensional random matrices}.\newline
Mathematics Monograph Series {\bf 2}, Sciences Press, Beijing 2006
 
 
\bibitem{GT:05}G\"otze, F.; Tikhomirov, A. N.
 {\em The rate of convergence for spectra of GUE and LUE matrix ensembles.} 
 Cent. Eur. J. Math. {\bf 3},  no. 4, (2005),  666--704

\bibitem{GT:03} G\"otze, F. and Tikhomirov, A. N.
     {\em Rate of convergence to the semi-circular law}.\\
     Probab. Theory Relat. Fields {\bf127} (2003), 228--276

  \bibitem{GT:04}G\"otze, F.; Tikhomirov, A. N.
 Rate of convergence in probability to the Marchenko-Pastur law. 
 Bernoulli  {\bf 10},  no. 3,  (2004), 503--548. 


 \bibitem{hall} P.Hall, C.C. Heyde
     {\em Martingale limit theory and its application}
     Academic Press,
      111 Fifth Avenue, New York, NY 10003, 
      (1980)
     pp. 308 


\bibitem{Horn}Horn, R., Johnson, Ch.
     {\em Matrix analysis}.\newline
     Cambridge University Press, 1991, pp. 561 




\bibitem{M-P:67}  Marchenko and V., Pastur, L.
{\em The eigenvalue distribution in some ensembles of random matrices}.\newline
Math.USSR Sbornik, {\bf1} (1967), 457-483




\bibitem{Me} Mehta, M. L. {\em Random Matrices.}\newline 2nd ed., Academic Press,
San Diego 1991

\bibitem{P:73}Pastur, L.A.{\em Spectra of random self adjoint operators.}\newline
Russian mathematical Surveys {\bf 28}, 1, 1--67, 1973




\bibitem{Petrov:75}Petrov, V. V.
{\em Sums of independent random variables}.\newline  Springer Verlag,
Berlin 1975, 345 pp.


\end{thebibliography}
\end{document}